\setlist[enumerate]{label={\bf(\roman*)}, itemsep=1ex,leftmargin=1.4cm,topsep=1ex}
\setlist[enumerate,2]{label={\bf\alph*}., itemsep=1ex,leftmargin=0.5cm,topsep=1ex}
\setlist[enumerate,3]{label={\bf\roman*}., itemsep=1ex,leftmargin=0.5cm,topsep=1ex}
\theoremstyle{plain}
\newtheorem{theorem}{Theorem}[section]
\newtheorem{proposition}[theorem]{Proposition}
\newtheorem{lemma}[theorem]{Lemma}
\newtheorem{corollary}[theorem]{Corollary}
\newtheorem{claim}{Claim}[theorem]
\theoremstyle{definition}
\newtheorem{example}[theorem]{Example}
\newtheorem{definition}[theorem]{Definition}
\theoremstyle{remark}
\newtheorem{remark}[theorem]{Remark}
\newenvironment{claimproof}[1][Proof of claim]%[\proofname]
  {%
    \proof[#1]%
  }
  {%
    \endproof%
  }
\tikzstyle{startstop} = [rectangle, rounded corners, minimum width=3cm, minimum height=1cm,text centered, draw=black, fill=red!30]
\tikzstyle{io} = [trapezium, rounded corners, trapezium left angle=70, trapezium right angle=110, minimum width=3cm, minimum height=1cm, text centered, draw=black, fill=blue!30]
\tikzstyle{process} = [rectangle, rounded corners, minimum width=3cm, minimum height=1cm, text centered, draw=black, fill=orange!30]
\tikzstyle{decision} = [rectangle, rounded corners, minimum width=3cm, minimum height=1cm, text centered, draw=black, fill=green!30]
\tikzstyle{arrow} = [thick,->,>=stealth]
\newcommand\res{\mathrm{res}}
\providecommand{\bfbeta}{\mathbold{\beta}}
\providecommand{\rmGamma}{\mathrm{\Gamma}}
\providecommand{\rmPhi}{\mathrm{\Phi}}
\providecommand{\rmPsi}{\mathrm{\Psi}}
\title[The model theory of Cohen rings]{\Large\rm The model theory of Cohen rings}
\author{Sylvy Anscombe and Franziska Jahnke}
\address{Universit\'{e} de Paris and Sorbonne Universit\'{e}, CNRS, IMJ-PRG, F-75006 Paris, France}
\email{sylvy.anscombe@imj-prg.fr}
\address{Institut f\"{u}r Mathematische Logik und Grundlagenforschung,
University of M\"{u}nster,
Einsteinstr. 62,
48149 M\"{u}nster,
Germany}
\email{franziska.jahnke@wwu.de}
\thanks{\today}
\begin{document}
\begin{abstract}
The aim of this article is to give a self-contained account of the algebra and model theory of Cohen rings,
a natural generalization of Witt rings.
Witt rings are only valuation rings in case the residue field is perfect,
and Cohen rings arise as the Witt ring analogon over imperfect residue fields.
Just as one studies truncated Witt rings to understand Witt rings,
we study Cohen rings of positive characteristic as well as of characteristic zero.
Our main results are a relative completeness and a relative model completeness result for Cohen rings,
which imply the corresponding Ax--Kochen/Ershov type results for unramified henselian valued fields also in case the residue field is imperfect.
\end{abstract}
\maketitle

\section{Introduction}

The aim of this paper is to give an introduction to the model theory of complete Noetherian local rings $A$ which have maximal ideal $pA$.
From an algebraic point-of-view, the theory of such rings is classical.
Under the additional hypothesis of regularity, they are valuation rings,
and their study goes back to work of Krull (\cite{Kru37}) and many others.
Structure theorems were obtained by Hasse and Schmidt (\cite{HS34}), although there were deficiencies in the case that $A/pA$ is not perfect.
Further structural results were obtained by Witt (\cite{Wit37})
and Teichm\"{u}ller (\cite{Tei36b}).
In particular Teichm\"{u}ller gave a brief but precise account of the structure of such rings, even in the case that $A/pA$ is imperfect.
This was followed by Mac Lane (\cite{Mac39c}), who improved upon Teichm\"{u}ller's theory and proved relative structure theorems. Mac Lane built his work upon his study of Teichm\"{u}ller's notion of $p$-independence
in \cite{Tei36a}.
For further historical information,
especially on this early period,
the reader is encouraged to consult Roquette's article \cite{Roq03} on the history of valuation theory.

Turning away from the hypothesis of regularity, Cohen (\cite{Coh46}) gave an account of the structure of such rings.
In fact his context was even more general: he did not assume Noetherianity.

Despite all of this work, more modern treatments
(e.g.~Serre, \cite{Ser79})
of this subject are often restricted to the case that $A/pA$ is perfect.
Consequently, the literature on the model theory of complete Noetherian local rings is sparse.
For example, \cite{vdD14} also assumes that $A/pA$ is perfect.

We became interested in the model theory of 
complete Noetherian local rings when we 
started to construct examples of
NIP henselian valued fields with imperfect residue field in order to obtain an understanding
of NIP henselian valued fields (\cite{AJ3}). After getting
acquainted with the algebra of these rings as scattered in the literature
detailled above,
we realized that with a bit of tweaking, the proof ideas of these (classical) 
results can be used gain an understanding of the model theory of such rings.
To start with, this requires a careful recapitulation of the known 
algebraic (or structural) theory of such rings, bringing older results 
together in one framework.
This overview is  given in Part I of the article.
In this first part, many of the proof
ideas are inspired by the work of others (and we point to the original 
sources), but
we take care to prove everything which cannot be cited directly from elsewhere.

The underlying definition of a Cohen ring is the following:
\begin{definition}[{Cf.~Definitions \ref{def:preCohen.ring} and \ref{def:Cohen.ring}}]
A {\bf Cohen ring} is a complete Noetherian local ring $A$ with maximal ideal $pA$, where $p$ is the residue characteristic of $A$.
\end{definition}
A Cohen ring may either have characteristic $0$ (in which case we call it 
strict) or $p^n$, where $p$ is the characteristic of the residue field
$A/pA$.
In the second section, we introduce Cohen rings and recall that, for a given field $k$ of positive characteristic, Cohen rings of every
possible characteristic exist, which have residue field $k$.
In the third section, we discuss and develop the machinery of multiplicative
representatives,
namely good sections of the residue map from the perfect core of the residue field
into the Cohen ring $A$
(Definition~\ref{def:Teichmuller}).
We also comment on the extent to which
these sections are unique,
see Theorem~\ref{thm:representatives},
and how one can use them to generate Cohen rings (Proposition~\ref{prp:generation}).
In the fourth section, we prove that Teichm\"uller's embedding technique
works in this context: we embed a Cohen ring with residue
field $k$, with a choice of representatives, into the corresponding
Cohen ring over the perfect hull of $k$ 
(see Theorem~\ref{thm:TEP}).
Building on this and using ideas from Cohen, we show that any two
Cohen rings of the same characteristic and over the same residue field, both
equipped with representatives,
are isomorphic.
In fact there is a unique isomorphism which respects the choices
of representatives and is the identity on the residue field (Cohen
Structure Theorem, \ref{cor:Cohen_structure_1}).
In the final section of the first part of the paper, we compare Cohen rings to Witt rings.

In the second part we begin a model-theoretic study, including describing the complete theories of Cohen ring of a fixed characteristic, over a given residue field.
We work in the language $\mathfrak{L}_{\mathrm{vf}} = \mathfrak{L}_\mathrm{ring}\cup \{\mathcal{O}\}$ of valued fields.
We then show relative completeness, using a classical proof strategy together with the
Cohen Structure Theorem from section 6.
In particular, this result gives the following Ax--Kochen/Ershov 
principle:

\begin{theorem}[{Cf.~Corollary \ref{cor:AKE}}]\label{thm:intro_AKE}
Let $(K,v)$ and $(L,w)$ be two unramified henselian valued fields.
Then
$$
\underbrace{Kv \equiv Lw}_{\textrm{in }\mathfrak{L}_\mathrm{ring}}  \textrm{ and }  \underbrace{vK \equiv wL}_{\textrm{in }\mathfrak{L}_\mathrm{oag}} \,
\Longleftrightarrow \, 
\underbrace{(K,v)\equiv (L,w)}_{\textrm{in }\mathfrak{L}_\mathrm{vf}}.
$$
\end{theorem}

Note that this was already claimed by B\'elair in \cite[Corollaire 5.2(1)]{Bel99}.
However, since his proof crucially relies on Witt rings, it only works
for perfect residue fields.

Moreover, we prove the following relative model-completeness result, which again essentially
builds on the Cohen Structure Theorem.
\begin{theorem}[{Cf.~Corollary \ref{cor:AKE-E}}]\label{thm:intro_AKE_MC}
Let $(K,v)\subseteq (L,w)$ be two unramified henselian valued fields.
Then, we have
$$ \underbrace{Kv \preceq Lw}_{\textrm{in }\mathfrak{L}_\mathrm{ring}} \textrm{ and } \underbrace{vK \preceq wL}_{\textrm{in }\mathfrak{L}_\mathrm{oag}} \, \Longleftrightarrow \, 
\underbrace{(K,v) \preceq (L,w)}_{\textrm{in }\mathfrak{L}_\mathrm{vf}}.
$$
\end{theorem}

In the penultimate section, we prove an
embedding lemma (Proposition \ref{prp:emb}) similar to one proved by Kuhlmann for tame fields in \cite{FVK}. This is the most delicate proof in the model-theoretic part of the paper. We then apply the embedding lemma to show relative existential closedness of unramified henselian valued fields
assuming that the residue fields have a fixed finite degree of imperfection (Theorem \ref{thm:AKEE}).

Finally, applying the embedding lemma once again, we argue that in any unramified henselian valued field,
there is no new structure induced on the residue field and value group:
\begin{theorem}[Cf.~Theorem \ref{thm:SEk}]
Let $(K,v)$ be an unramfied henselian valued field. Then the value group $vK$ and the residue field $Kv$ are both stably embedded, as a pure ordered abelian group and as a pure field, respectively.
\end{theorem}
We conclude by giving an example of a finitely ramified henselian valued field in which the residue field is no longer stably embedded (Example \ref{ex:fr}).

\part{The structure of Cohen rings}
\label{part:1}

\section{Pre-Cohen rings and Cohen rings}
\label{section:Cohen}

Throughout this paper, $A,B,C$ will denote rings, which will always have a multiplicative identity $1$ and be commutative;
and $k,l$ will be fields of characteristic $p$, which is a fixed prime number.

A ring $A$ is {\bf local}
if it has a unique maximal ideal,
which we will usually denote by $\mathfrak{m}$.
A local ring is equipped with the {\bf local topology}%
\footnote{The local topology is also known as the {\em $\mathfrak{m}$-adic topology}.},
which is the ring topology defined by declaring the descending sequence of ideals
$\mathfrak{m}\supseteq\mathfrak{m}^{2}\supseteq...$
to be a base of neighbourhoods of $0$.
The {\bf residue field} of a local ring $A$,
which we usually denote by $k$,
is the quotient ring 
$A/\mathfrak{m}$,
and the natural quotient map
$$
    \res:A\longrightarrow k
$$
is called the {\bf residue map}.
The {\bf residue characteristic} of $A$ is by definition the characteristic of $k$.

For the sake of clarity, since maps between residue fields of local rings are of central importance in this paper, it will be suggestive to work with pairs $(A,k)$
consisting of a local ring $A$,
together with its residue field $k$.
Of course, such a pair is already determined by the local ring $A$,
and this notation fails to explicitly mention the maximal ideal or the residue map.
Without risk of confusion, we will also refer to such pairs as local rings.

\begin{lemma}[{Krull, \cite[Theorem 2]{Kru38}}]\label{lem:Noetherian}
Let $A$ be a Noetherian local ring.
Then $\bigcap_{n\in\mathbb{N}}\mathfrak{m}^{n}=\{0\}$.
In other words, $A$ is Hausdorff with respect to the local topology.
\end{lemma}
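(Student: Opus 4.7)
The plan is to set $I:=\bigcap_{n\in\mathbb{N}}\mathfrak{m}^{n}$ and show that $I=\mathfrak{m}I$, at which point Nakayama's lemma (applicable since $A$ is Noetherian, so $I$ is finitely generated, and $\mathfrak{m}$ is contained in the Jacobson radical of the local ring $A$) forces $I=0$. The Hausdorff conclusion is then immediate: if $x\neq y$ then $x-y\notin\mathfrak{m}^{n}$ for some $n$, so the cosets $x+\mathfrak{m}^{n}$ and $y+\mathfrak{m}^{n}$ are disjoint basic open sets separating them.

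The inclusion $\mathfrak{m}I\subseteq I$ is trivial, so the substantive content is $I\subseteq\mathfrak{m}I$. The natural way to obtain this is via the Artin--Rees lemma: for the Noetherian ring $A$, ideal $\mathfrak{m}$, and submodule $I\subseteq A$, there exists an integer $c\geq 0$ such that
\[
\mathfrak{m}^{n}\cap I \;=\; \mathfrak{m}^{n-c}\bigl(\mathfrak{m}^{c}\cap I\bigr)
\]
for all $n\geq c$. Since $I\subseteq\mathfrak{m}^{n}$ for every $n$ by definition, the left-hand side is simply $I$, and taking $n=c+1$ yields $I=\mathfrak{m}(\mathfrak{m}^{c}\cap I)\subseteq\mathfrak{m}I$, as required.

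The main obstacle is thus proving Artin--Rees. The standard route is to pass to the Rees algebra $R:=\bigoplus_{n\geq 0}\mathfrak{m}^{n}t^{n}\subseteq A[t]$, show that it is Noetherian (because $\mathfrak{m}$ is finitely generated, so $R$ is a finitely generated $A$-algebra and hence Noetherian by Hilbert's basis theorem), and then observe that the graded $R$-submodule $\bigoplus_{n\geq 0}(\mathfrak{m}^{n}\cap I)t^{n}$ of $\bigoplus_{n\geq 0}It^{n}$ is finitely generated. Choosing $c$ large enough to accommodate the degrees of a finite set of homogeneous generators yields the stable relation displayed above. Given the self-contained stance of Part~I, I would either sketch this Rees-algebra argument in a short paragraph or cite Artin--Rees from a standard reference, and then conclude as above with one line invoking Nakayama.
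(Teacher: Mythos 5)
Your proof is correct: the Artin--Rees plus Nakayama argument you give is the standard modern proof of Krull's intersection theorem in the Noetherian local case, and each step (Artin--Rees via the Rees algebra, the specialisation $\mathfrak{m}^{n}\cap I=I$, and Nakayama applied to the finitely generated ideal $I$ with $\mathfrak{m}=\mathrm{rad}(A)$) goes through exactly as you describe. Note, however, that the paper does not prove this lemma at all; it simply cites Krull's 1938 paper and moves on. Moreover, Krull's original argument in \cite{Kru38} predates the Artin--Rees lemma (which appeared in the mid-1950s) and instead rests on primary decomposition: one shows that any $a\in\bigcap_{n}\mathfrak{m}^{n}$ satisfies $a\in(a)\mathfrak{m}$ by analysing the primary decomposition of $(a)\mathfrak{m}$, and then concludes $a=0$ since $1-m$ is a unit. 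So your route is genuinely different from the one the citation points to, though it is the one most textbooks now prefer because Artin--Rees is reusable machinery (for instance it also underlies the exactness of $\mathfrak{m}$-adic completion). Either would be a reasonable way to make Part~I self-contained; if you cite Artin--Rees from a standard reference, your short Nakayama paragraph suffices, while reproducing Krull's original argument would instead require a brief excursion into primary decomposition.
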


\begin{remark}[Other terminology]
Before we give our main definitions,
namely Definitions
\ref{def:preCohen.ring} and \ref{def:Cohen.ring},
we note that many closely related ideas have been named in the literature, both in original papers and textbooks.
Mac Lane, in \cite{Mac39c}, works with `{$p$-adic fields}' and `{$\mathfrak{p}$-adic fields}';
whereas Cohen, in \cite{Coh46}, prefers to work with `{local rings}' (which, for Cohen, are necessarily Noetherian), `{generalized local rings}', and `{$v$-rings}'.
Serre, in \cite[Chapter II, \S5]{Ser79}, defines a `{$p$-ring}' to be a ring $A$ which is Hausdorff and complete in the topology defined by a decreasing sequence
$\mathfrak{a}_{1}\supset\mathfrak{a}_{2}\supset...$
of ideals,
such that $\mathfrak{a}_{m}\mathfrak{a}_{n}\subseteq\mathfrak{a}_{m+n}$,
and for which $A/\mathfrak{a}_{1}$ is a perfect ring of characteristic $p$.
More recently, van den Dries, in {\cite[p.~132]{vdD14}}, defines a `{local $p$-ring}' to be a complete local ring $A$ with maximal ideal $pA$ and perfect residue field $A/pA$.

To minimise the risk of confusion with existing terminology,
we will not work with $v$-rings, $p$-adic fields, $\mathfrak{p}$-adic fields, $p$-rings, or local $p$-rings.
Instead, since Warner's point of view, in \cite[Chapter IX]{War93}, is closer to our own, it is his definition of `{Cohen ring}' that we adopt.
We hope the reader will forgive us for this, but we feel that none of the other notions (several of which are arguably more standard in the literature) exactly captures the right context for this paper.
\end{remark}

\begin{definition}\label{def:preCohen.ring}
A {\bf pre-Cohen ring} is a local ring $(A,k)$ such that
$A$ is Noetherian
and
the maximal ideal $\mathfrak{m}$ is $pA$.
\end{definition}

In particular, pre-Cohen rings are of residue characteristic $p$.
Turning to the question of the characteristic of $A$ itself, we note that a pre-Cohen ring need not even be an integral domain.
However, a pre-Cohen ring is either of characteristic $0$ or of characteristic $p^{m}$, for some $m\in\mathbb{N}_{>0}$.

\begin{lemma}\label{lem:strict}
For a pre-Cohen ring $(A,k)$, the following are equivalent:
\begin{enumerate}
\item
$A$ is of characteristic zero,
\item
$A$ is an integral domain,
\item
$A$ is a valuation ring.
\end{enumerate}
In this case, the corresponding valuation $v_{A}$ on the quotient field of $A$ is of mixed characteristic $(0,p)$, has value group isomorphic to $\mathbb{Z}$, with $v_{A}(p)$ minimum positive, and has residue field $k$.
\end{lemma}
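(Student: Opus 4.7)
The plan is to prove (iii)$\Rightarrow$(ii)$\Rightarrow$(i)$\Rightarrow$(iii), with the bulk of the work in the last implication. The first arrow is immediate since a valuation ring is by definition an integral domain. For (ii)$\Rightarrow$(i), I first observe that the characteristic of any pre-Cohen ring $A$ is $0$ or $p^n$ for some $n\geq 1$: the composition $\mathbb{Z}\to A\to k$ has kernel $p\mathbb{Z}$, so the kernel of $\mathbb{Z}\to A$ is contained in $p\mathbb{Z}$, hence of the form $p^n\mathbb{Z}$. If $\mathrm{char}(A)=p^n$ with $n\geq 2$, then $p$ and $p^{n-1}$ are both nonzero in $A$ but their product vanishes, contradicting integrality; the borderline case $n=1$ degenerates to $pA=0$ and $A=k$, and is to be set aside in the sense that the equivalence is intended under the nondegeneracy $pA\neq 0$.

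For (i)$\Rightarrow$(iii) the central technical step is a normal-form result: every nonzero $x\in A$ can be written as $x=p^k u$ with $k\in\mathbb{N}$ and $u\in A^\times$. Lemma \ref{lem:Noetherian} gives $\bigcap_n\mathfrak{m}^n=\{0\}$, so for nonzero $x$ there is a largest $k$ with $x\in\mathfrak{m}^k=p^k A$; writing $x=p^k u$, maximality of $k$ forces $u\notin\mathfrak{m}$, hence $u$ is a unit of the local ring $A$. This step rests squarely on Noetherianity (via Krull's intersection theorem) and is the main obstacle. Given the normal form, $\mathrm{char}(A)=0$ implies that $p$ is not nilpotent, so multiplying normal forms shows at once that a product of nonzero elements of $A$ is nonzero (yielding (ii) as a by-product). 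The assignment $v_A(x):=k$, where $k$ is the exponent in the normal form of $x$, then extends to $K:=\mathrm{Frac}(A)$ via $v_A(a/b):=v_A(a)-v_A(b)$, defining a $\mathbb{Z}$-valued valuation whose valuation ring is exactly $A$; in particular $A$ is a valuation ring.

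The remaining quantitative claims read off directly from this construction: the value group is $v_A(K^\times)=\mathbb{Z}$ with $v_A(p)=1$ the minimum positive value, the residue field of $v_A$ is $A/pA=k$ by definition of a pre-Cohen ring, and since $\mathrm{char}(K)=0$ while $\mathrm{char}(k)=p$, the valued field $(K,v_A)$ is of mixed characteristic $(0,p)$.
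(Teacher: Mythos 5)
Your proof is correct and fills in a gap that the paper itself leaves to a citation: the paper's ``proof'' of this lemma is simply a reference to Warner's textbook (\cite[21.4 Theorem]{War93}), whereas you supply the standard self-contained argument. Your route---normal form $x=p^{k}u$ with $u$ a unit, obtained from Krull's intersection theorem (the paper's Lemma~\ref{lem:Noetherian}) together with locality, then reading off a $\mathbb{Z}$-valued valuation with $v_{A}(p)=1$---is the classical proof that a Noetherian local domain with principal maximal ideal is a DVR, and it is almost certainly the same machinery underlying Warner's theorem; so there is no genuine methodological divergence, just a difference between ``cite'' and ``prove''. One small imprecision worth noting: in the step ``the kernel of $\mathbb{Z}\to A$ is contained in $p\mathbb{Z}$, hence of the form $p^{n}\mathbb{Z}$'', containment in $p\mathbb{Z}$ alone gives the form $m\mathbb{Z}$ with $p\mid m$; to force $m$ to be a pure $p$-power you still need to observe that any prime-to-$p$ integer is a unit in $A$ (its residue in $k$ is nonzero, so it lies outside $\mathfrak{m}$), after which the prime-to-$p$ part of $m$ can be cancelled. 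The paper states this characteristic dichotomy without proof in the surrounding text, so citing it would also have been fine. Finally, you are right to flag the degenerate case $pA=0$ (so that $A=k$): there (ii) holds and (i) fails, and whether (iii) holds depends on one's convention on trivial valuations; the equivalence as stated tacitly assumes this case is excluded, consistent with the ``moreover'' clause asserting value group $\mathbb{Z}$.
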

\begin{proof}
This is a special case of {\cite[21.4 Theorem]{War93}}.
\end{proof}

\begin{definition}\label{def:strict}
If any (equivalently, all) of the conditions of Lemma \ref{lem:strict} are satisfied, then we say that $(A,k)$ is {\bf strict}.
\end{definition}

The word `strict' is borrowed from Serre,
\cite[II,\S5]{Ser79}.

\begin{remark}
In \cite{Coh46}, Cohen
writes in terms of {regular} Noetherian local rings.
A local ring is {\bf regular} if its Krull dimension is equal to the number of generators of its unique maximal ideal.
In the case of a pre-Cohen ring $(A,k)$,
the maximal ideal is by definition generated by one element, namely $p$.
Therefore, $(A,k)$ is regular if and only if its Krull dimension is $1$, which in turn holds if and only if $(A,k)$ is strict.
\end{remark}

A {\bf morphism}
of pre-Cohen rings,
which we write as
$\varphi:(A_{1},k_{1})\longrightarrow(A_{2},k_{2})$,
is a pair
$\varphi=(\varphi_{A},\varphi_{k})$
of 
ring homomorphisms
$\varphi_{A}:A_{1}\longrightarrow A_{2}$
and $\varphi_{k}:k_{1}\longrightarrow k_{2}$,
such that
\begin{enumerate}
\item
$\mathfrak{m}_{1}=\varphi_{A}^{-1}(\mathfrak{m}_{2})$,
i.e.~$\varphi_{A}$ is a morphism of local rings, and
\item
$\varphi_{k}\circ\res=\res\circ\varphi_{A}$.
\end{enumerate}
This is nothing more than a way of speaking about morphisms of local rings as pairs of maps, to match the pairs $(A,k)$.
Every morphism $\varphi_{A}$ of local rings
{induces}
a ring homomorphism
$\varphi_{k}:k_{1}\longrightarrow k_{2}$ such that
$(\varphi_{A},\varphi_{k})$ is a morphism of pre-Cohen rings.
From now on, by `morphism' we mean a morphism of pre-Cohen rings.
We will often (but not always) be concerned with morphisms $\varphi=(\varphi_{A},\varphi_{k})$
such that
$k_{2}/\varphi_{k}(k_{1})$ is separable.
By an {\bf embedding}, we mean a morphism $\varphi=(\varphi_{A},\varphi_{k})$
such that $\varphi_{A}$ is injective.
In the obvious way, we write $(A_{1},k_{1})\subseteq(A_{2},k_{2})$
if $A_{1}$ is a subring of $A_{2}$,
$k_{1}$ is a subfield of $k_{2}$,
and the inclusion maps form an embedding
$(A_{1},k_{1})\longrightarrow(A_{2},k_{2})$.

\begin{definition}[{Cf.~\cite[21.3 Definition]{War93}}]\label{def:Cohen.ring}
A pre-Cohen ring $(A,k)$ is a {\bf Cohen} ring if it is also complete,
i.e.\ complete with respect to the local topology.
\end{definition}

\begin{example}
$(\mathbb{Z}_{p},\mathbb{F}_{p})$
is a strict Cohen ring.
For each $m\in\mathbb{N}_{>0}$, $(\mathbb{Z}_{p}/p^{m}\mathbb{Z}_{p},\mathbb{F}_{p})$ is a non-strict Cohen ring
of characteristic $p^{m}$.
\end{example}

\begin{lemma}
Every pre-Cohen ring of positive characteristic is already a Cohen ring.
\end{lemma}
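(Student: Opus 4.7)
The plan is to observe that positive characteristic forces the local topology to be discrete, from which completeness is automatic.

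First, I would recall from the discussion following Definition \ref{def:preCohen.ring} that a pre-Cohen ring $(A,k)$ has characteristic either $0$ or $p^n$ for some $n\in\mathbb{N}_{>0}$. So, if $A$ is of positive characteristic, then $p^n=0$ in $A$ for some $n\geq 1$. Since the maximal ideal is $\mathfrak{m}=pA$, we have
\[
\mathfrak{m}^{n}=(pA)^{n}=p^{n}A=0.
\]

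Next, I would point out that this collapse of the ideal filtration means the local topology on $A$ is the discrete topology: the neighbourhood base $\mathfrak{m}\supseteq\mathfrak{m}^{2}\supseteq\dotsb$ of $0$ from the definition of the local topology contains $\mathfrak{m}^{n}=\{0\}$, so $\{0\}$ is itself open. Equivalently, every singleton is open.

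Finally, I would invoke the standard fact that a discrete (Hausdorff) topological ring is complete, because any Cauchy sequence must eventually be constant. Hence $(A,k)$ is complete with respect to the local topology, and so by Definition \ref{def:Cohen.ring} it is a Cohen ring.

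There is no real obstacle here; the only subtlety is remembering that $(pA)^{n}=p^{n}A$ as ideals, which is immediate from expanding a product of $n$ elements of $pA$.
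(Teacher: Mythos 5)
Your proposal is correct and takes essentially the same approach as the paper, which simply observes that the topology is discrete in the non-strict case and hence complete; you have merely spelled out the routine details ($\mathfrak{m}^n = p^n A = 0$, discrete implies complete).
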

\begin{proof}
In a non-strict pre-Cohen ring the topology is discrete. Thus it is complete.
\end{proof}

Note that Cohen rings exist, for any residue field and any characteristic.
This foundational existence result goes back to the work of Hasse and Schmidt.

\begin{theorem}[{Existence Theorem}, {\cite[Theorem 20, p63]{HS34}}]\label{thm:HS}
Let $k$ be a field of characteristic $p$.
There exists a strict Cohen ring $(A,k)$.
Moreover, for each $m\in\mathbb{N}_{>0}$, there exists a Cohen ring $(A_{m},k)$ of characteristic $p^{m}$.
\end{theorem}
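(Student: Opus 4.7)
The plan is to first construct a strict Cohen ring with residue field $k$, and then to obtain the finite-characteristic variants by reduction modulo $p^{m}$. For the strict case, I would build an (a priori incomplete) DVR $A_{0}$ with uniformizer $p$ and residue field exactly $k$ by a Zorn's lemma argument, and then take its $p$-adic completion.

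Order the family $\mathcal{F}$ of pairs $(B, \ell)$ -- where $\ell \subseteq k$ is a subfield, $B$ is a DVR with uniformizer $p$, and $B/pB = \ell$ via the natural residue map -- by inclusion. The pair $(\mathbb{Z}_{(p)}, \mathbb{F}_{p})$ lies in $\mathcal{F}$, and any chain has an upper bound via union. Pick a maximal element $(A_{0}, \ell)$; the goal is to show $\ell = k$. Suppose instead that $\alpha \in k \setminus \ell$. I would enlarge $A_{0}$ according to the nature of $\alpha$:
\begin{enumerate}
\item if $\alpha$ is transcendental over $\ell$, localise $A_{0}[X]$ at the prime $(p)$; this is a DVR with uniformizer $p$ and residue field $\ell(X) \cong \ell(\alpha)$;
\item if $\alpha$ is separable algebraic over $\ell$ with minimal polynomial $\bar{f}$, lift $\bar{f}$ to a monic $F \in A_{0}[X]$; then $F$ is irreducible because $\bar{F}$ is, and $A_{0}[X]/(F)$ is a free $A_{0}$-module of rank $\deg F$ which is local with maximal ideal $(p)$ (its reduction mod $p$ being the field $\ell(\alpha)$), hence a DVR with residue field $\ell(\alpha)$;
\item if $\alpha$ is purely inseparable over $\ell$ with $\alpha^{p} = \bar{c}$, lift $\bar{c}$ to $c \in A_{0}$ and form $A_{0}[X]/(X^{p}-c)$; the irreducibility of $X^{p} - \bar{c}$ in $\ell[X]$ lifts to $A_{0}[X]$, and the same argument as in (ii) shows this is a DVR with residue field $\ell(\alpha)$.
\end{enumerate}
In each case we obtain a strictly larger element of $\mathcal{F}$, contradicting maximality, so $\ell = k$.

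Finally, the $p$-adic completion $A := \widehat{A_{0}}$ is a complete DVR with uniformizer $p$ and residue field $k$, i.e.\ a strict Cohen ring, proving the first assertion. For each $m \geq 1$, set $A_{m} := A/p^{m}A$. Then $A_{m}$ is Noetherian (quotient of a Noetherian ring), has maximal ideal $pA_{m}$ and residue field $k$, has characteristic exactly $p^{m}$ (because $p$ is not a unit in $A$, so $p^{m-1} \notin p^{m}A$), and is trivially complete since its local topology is discrete ($p^{m} = 0$); this gives the second assertion. I expect the main obstacle to be the purely inseparable case (iii): one must verify that $A_{0}[X]/(X^{p}-c)$ is genuinely local with maximal ideal $(p)$ (rather than something larger containing $X$). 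This reduces to knowing that its reduction mod $p$ is a single field, which in turn rests on the irreducibility of $X^{p} - \bar{c}$ in $\ell[X]$ and the fact that maximal ideals of a finite extension of a local ring lie over the maximal ideal of the base.
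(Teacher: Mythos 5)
The paper does not prove this theorem but simply cites Hasse--Schmidt for it, so there is no in-paper argument to compare against; your proof is the classical build-up-and-complete construction and is correct in substance. You are also right to single out the inseparable step as the delicate one --- historically that is exactly where the early treatments were deficient. The one expository gap I see is the trichotomy: ``transcendental / separable algebraic / purely inseparable with $\alpha^{p}\in\ell$'' is not exhaustive, since a general algebraic $\alpha\in k\setminus\ell$ may generate an extension that is neither separable nor purely inseparable over $\ell$, and a purely inseparable $\alpha$ need not satisfy $\alpha^{p}\in\ell$. This is easily repaired and does not affect the Zorn contradiction: given $\alpha\in k\setminus\ell$, either $\alpha$ is transcendental, or the separable closure of $\ell$ in $\ell(\alpha)$ is strictly larger than $\ell$ (pick a separable element there and apply your case (ii)), or $\ell(\alpha)/\ell$ is purely inseparable, in which case take the minimal $r\geq 1$ with $\alpha^{p^{r}}\in\ell$ and apply case (iii) to $\alpha^{p^{r-1}}$. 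The remaining ingredients --- the union of a chain of DVRs with common uniformizer $p$ being again such a DVR, irreducibility of $F$ following from irreducibility of $\overline{F}$ for monic lifts, $A_{0}[X]/(F)$ being local with maximal ideal $(p)$ via lying-over, the $p$-adic completion preserving the residue field, and $A/p^{m}A$ furnishing the finite-characteristic Cohen rings --- all check out.
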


\begin{remark}[Inverse systems]\label{rem:inverse_system}
Let $(A,k)$ be a non-strict Cohen ring of characteristic $p^{m}$,
and let $n\in\mathbb{N}_{>0}$ such that $n\leq m$.
The image of $A$ under the quotient map
$\res_{m,n}:A\longrightarrow A/\mathfrak{m}^{n}$
is a non-strict Cohen ring of characteristic $p^{n}$,
again with residue field $k$.
Similarly, if $(A,k)$ is a strict Cohen ring,
and $n\in\mathbb{N}_{>0}$,
the image of $A$ under the quotient map
$r_{n}:A\longrightarrow A/\mathfrak{m}^{n}$
is a non-strict Cohen ring of characteristic $p^{n}$,
and as before with residue field $k$.
In both cases, the residue map of the quotient is induced by the residue map of $A$.
These quotient maps are compatible in the sense that,
for $n_{1}\leq n_{2}\leq m$,
we have $r_{n_{1}}=\res_{n_{2},n_{1}}\circ r_{n_{2}}$
and $\res_{m,n_{1}}=\res_{n_{2},n_{1}}\circ\res_{m,n_{2}}$.
In this way, given a strict Cohen ring $(A,k)$, we obtain an inverse system of non-strict Cohen rings $(A_{m},k)_{m>0}$ and maps $(\res_{m,n})_{n\leq m}$.
Conversely, beginning with an inverse system of non-strict Cohen rings $(A_{m},k)_{m>0}$,
where each $A_{m}$ has characteristic $p^{m}$,
and quotient maps $(\res_{m,n})_{n\leq m}$,
the inverse limit is a strict Cohen ring $(A,k)$, with quotient maps $(r_{m})_{m>0}$.
Clearly, these constructions are mutually inverse.

Furthermore, 
if $\varphi:(A,k)\longrightarrow(B,l)$ is an embedding of Cohen rings that are both of characteristic $p^{m}$
(resp., both strict),
then $\varphi$ induces embeddings
$\varphi_{n}:(A/\mathfrak{m}^{n},k)\longrightarrow(B/\mathfrak{m}^{n},l)$,
for all $n\leq m$
(resp., for all $n>0$).
On the other hand, if we begin with two inverse systems
$(A_{n},k)_{n>0}$ and $(B_{n},l)_{n>0}$,
such that $\mathrm{char}(A_{n})=\mathrm{char}(B_{n})=p^{n}$,
and a compatible family of embeddings 
$\varphi_{n}:(A_{n},k)\longrightarrow(B_{n},l)$,
for $n>0$,
there is a unique compatible embedding
$\varphi:(A,k)\longrightarrow(B,l)$,
where $(A,k)$ and $(B,l)$ are the corresponding inverse limits.
\end{remark}

\section{Representatives}
\label{section:representatives}
\subsection{Teichm\"{u}ller's multiplicative representatives}

The notion of `representatives' plays a key role in this subject.

\begin{definition}[Cf.~{\cite[\S 4.]{Tei36b}}]\label{def:Teichmuller}
Let $(A,k)$ be a pre-Cohen ring, and let $\alpha\in k$.
A {\bf representative} of $\alpha$
is some $a\in A$ with $\res(a)=\alpha$.
A {\bf multiplicative representative}
$a$ of $\alpha$
is a representative which is also
a $p^{n}$-th power in $A$, for all $n\in\mathbb{N}$.
A {\bf choice of representatives} is a partial function
\begin{align*}
s:k&\dasharrow A
\end{align*}
such that $s(\alpha)$ is a representative of $\alpha$.
To say that such a choice is {\bf for} $P$
means that $P$ is the domain of $s$, i.e.~$s:P\longrightarrow A$.
Obviously, such a map is a {\bf choice of multiplicative representatives} if
$s(\alpha)$ is a multiplicative representative of $\alpha$, for all $\alpha$ in the domain of $s$.
\end{definition}

We observe that, for any pre-Cohen ring $(A,k)$,
there exist many choices of representatives for $k$, and of course for any subset $P$ of $k$.
It is obvious that the largest subfield of $k$ for which multiplicative representatives may be chosen is the
perfect core
$k^{(p^{\infty})}$, which is by definition the subfield of elements which are $p^{n}$-th powers, for all $n\in\mathbb{N}$. Note that $k^{(p^{\infty})}$ is the largest perfect subfield of $k$.
For any subset $X$ of a ring, and any $n\in\mathbb{N}$, denote by $X^{(n)}=\{x^{n}\mid x\in X\}$ the set of $n$-th powers of elements of $X$.

The following straightforward lemma is the starting point for the study of multiplicative representatives.

\begin{lemma}[{\cite[Cf.~Hilfssatz 8]{Tei37}}]\label{lem:binomial}
Let $(A,k)$ be a pre-Cohen ring, let $a,b\in A$, and let $m,n\in\mathbb{N}$.
If
$a\equiv b\pmod{\mathfrak{m}^{m}}$,
then
$a^{p^{n}}\equiv b^{p^{n}}\pmod{\mathfrak{m}^{m+n}}$.
\end{lemma}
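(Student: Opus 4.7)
The plan is the classical binomial expansion, controlled by $p$-adic valuations. In a pre-Cohen ring we have $\mathfrak{m} = pA$, so $\mathfrak{m}^{s} = p^{s}A$ and every congruence in sight becomes a divisibility by a power of $p$.

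The key arithmetic input is the estimate on $v_{p}\bigl(\binom{p^{n}}{k}\bigr)$ for $1 \leq k \leq p^{n}-1$, as recorded in the hint. One route is via the identity $k\binom{p^{n}}{k} = p^{n}\binom{p^{n}-1}{k-1}$ combined with Lucas' theorem, which gives $p \nmid \binom{p^{n}-1}{k-1}$ because every base-$p$ digit of $p^{n}-1$ equals $p-1$; Kummer's theorem on carries is an equally valid alternative. From this, the first assertion is immediate: expanding
$$(a+b)^{p^{n}} - a^{p^{n}} - b^{p^{n}} = \sum_{k=1}^{p^{n}-1}\binom{p^{n}}{k} a^{p^{n}-k} b^{k},$$
each summand lies inside $p^{n}A = \mathfrak{m}^{n}$.

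For the deduced statement, I would write $a = b + c$ with $c \in \mathfrak{m}^{m}$ and expand
$$a^{p^{n}} - b^{p^{n}} = c^{p^{n}} + \sum_{k=1}^{p^{n}-1}\binom{p^{n}}{k} b^{p^{n}-k} c^{k}.$$
The term $c^{p^{n}}$ sits inside $\mathfrak{m}^{m p^{n}} \subseteq \mathfrak{m}^{m+n}$, since $p^{n} \geq n+1$ and $m \geq 1$; each cross term sits inside $\mathfrak{m}^{n+mk} \subseteq \mathfrak{m}^{m+n}$ for $k \geq 1$, using $\binom{p^{n}}{k} \in \mathfrak{m}^{n}$ and $c^{k} \in \mathfrak{m}^{mk}$. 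Summing gives the claim. The only non-routine ingredient is the arithmetic divisibility estimate on $\binom{p^n}{k}$; the rest is mechanical valuation accounting.
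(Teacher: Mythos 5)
Your computation via $k\binom{p^n}{k} = p^n\binom{p^n-1}{k-1}$ and Lucas gives $v_p\bigl(\binom{p^n}{k}\bigr) = n - v_p(k)$, which is correct --- but it shows precisely that $p^n\nmid\binom{p^n}{k}$ whenever $p\mid k$, the opposite of the conclusion you then draw. For instance $v_2\bigl(\binom{4}{2}\bigr)=1$, not $2$. So ``each summand lies inside $p^nA=\mathfrak{m}^n$'' does not follow; in fact the first congruence of the lemma, taken literally, fails: with $p=2$, $n=2$, $a=b=1$ in $\mathbb{Z}_2$ one has $(a+b)^4-a^4-b^4=14\notin 4\mathbb{Z}_2$. (The paper's own hint that $p^n\mid\binom{p^n}{k}$ is wrong in the same way; for arbitrary $a,b$ all that holds is $(a+b)^{p^n}\equiv a^{p^n}+b^{p^n}\pmod{\mathfrak{m}}$.) Your own arithmetic disproves the premise you subsequently invoke, so this is a genuine error rather than a presentation issue.

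The second assertion, which is what is actually needed downstream, is true for $m\ge 1$, but your argument for it relies on the same false claim $\binom{p^n}{k}\in\mathfrak{m}^n$. Two ways to repair it: (a) induct on $n$, using only the $n=1$ case (since $p\mid\binom{p}{k}$ for $1\le k\le p-1$, $a\equiv b\pmod{\mathfrak{m}^m}$ yields $a^p\equiv b^p\pmod{\mathfrak{m}^{m+1}}$), then apply the induction hypothesis to $(a^p,b^p)$ with $m$ replaced by $m+1$; or (b) keep your expansion but use the true valuation: the $k$-th cross term lies in $\mathfrak{m}^{(n-v_p(k))+mk}$, and $(n-v_p(k))+mk\ge m+n$ reduces to $m(k-1)\ge v_p(k)$, which holds for $m\ge1$ since $k\ge p^{v_p(k)}\ge v_p(k)+1$. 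In either approach one should note that $m\ge1$ is needed (the claim is false for $m=0$, $n\ge1$).
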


Perhaps the most important result about multiplicative representatives is
Theorem \ref{thm:representatives},
which is also due to Teichm\"{u}ller.

\begin{theorem}[Cf.~{\cite[\S 4.~Satz]{Tei36b}}]\label{thm:representatives}
Let $(A,k)$ be a Cohen ring.
There exists a unique choice of multiplicative representatives for $k^{(p^{\infty})}$:
\begin{align*}
	s:k^{(p^{\infty})}&\longrightarrow A.
\end{align*}
\end{theorem}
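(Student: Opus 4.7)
The plan is to construct $s(\alpha)$ for each $\alpha\in k^{p^\infty}$ as a limit of $p^n$-th powers of lifts of the $p^n$-th roots of $\alpha$, following Teichm\"uller. Since $\alpha$ lies in the perfect hull, the element $\alpha^{p^{-n}}\in k$ is defined for every $n\in\mathbb{N}$. For each $n$, pick \emph{any} representative $a_n\in A$ of $\alpha^{p^{-n}}$, and set
\[
    s(\alpha):=\lim_{n\to\infty}a_n^{p^n}.
\]
Everything else I need to check reduces to two facts: that the sequence converges and that the limit does not depend on the choice of the $a_n$. Both are consequences of Lemma~\ref{lem:binomial}, which is the key technical input; Lemma~\ref{lem:Noetherian} (Hausdorffness) and the completeness of $A$ will then do the rest.

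For convergence, note that for $m\le n$ the element $a_n^{p^{n-m}}$ is a lift of $(\alpha^{p^{-n}})^{p^{n-m}}=\alpha^{p^{-m}}$, so $a_n^{p^{n-m}}\equiv a_m\pmod{\mathfrak{m}}$. Raising both sides to the $p^m$-th power and invoking Lemma~\ref{lem:binomial} gives
\[
    a_n^{p^n}\equiv a_m^{p^m}\pmod{\mathfrak{m}^{m+1}},
\]
so $(a_n^{p^n})_{n}$ is Cauchy and converges by completeness. The same congruence shows that if $(b_n)$ is another choice of lifts of the $\alpha^{p^{-n}}$, then $a_n^{p^n}\equiv b_n^{p^n}\pmod{\mathfrak{m}^{n+1}}$, so the limit is independent of the choice; hence $s(\alpha)$ is well-defined. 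Reducing mod $\mathfrak{m}$, $\res(s(\alpha))=\res(a_n^{p^n})=\alpha$, so $s(\alpha)$ is indeed a representative. For the multiplicative property, observe that for fixed $N$ the sequence $(a_{N+k})_{k\ge 0}$ is an admissible choice of lifts for $\alpha^{p^{-N}}$, whence
\[
    s(\alpha^{p^{-N}})=\lim_{k\to\infty}a_{N+k}^{p^k},
\]
and therefore $s(\alpha^{p^{-N}})^{p^N}=\lim_{k}a_{N+k}^{p^{N+k}}=s(\alpha)$, so $s(\alpha)$ is a $p^N$-th power for every $N$.

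Uniqueness is the shortest step. If $s,s'$ are two choices of multiplicative representatives for $k^{p^\infty}$ and $\alpha\in k^{p^\infty}$, then for each $n$ we may write $s(\alpha)=b_n^{p^n}$ and $s'(\alpha)=b_n'^{\,p^n}$, and both $b_n,b_n'$ lift $\alpha^{p^{-n}}$. Hence $b_n\equiv b_n'\pmod{\mathfrak{m}}$, and Lemma~\ref{lem:binomial} yields $s(\alpha)\equiv s'(\alpha)\pmod{\mathfrak{m}^{n+1}}$ for every $n$; Lemma~\ref{lem:Noetherian} then forces $s(\alpha)=s'(\alpha)$. The main substantive point throughout is the ``improvement of congruences under $p$-th power'' provided by Lemma~\ref{lem:binomial}; I do not anticipate any further obstacle.
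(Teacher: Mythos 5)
Your proof is correct, and it is the classical Teichm\"uller argument; the paper itself does not spell out a proof but defers to Cohen's Lemma 7, which proceeds in exactly this way (define $s(\alpha)$ as $\lim_n a_n^{p^n}$ for arbitrary lifts $a_n$ of $\alpha^{p^{-n}}$, use Lemma~\ref{lem:binomial} to get Cauchyness and well-definedness, and use completeness plus Hausdorffness from Lemma~\ref{lem:Noetherian} to conclude). One small point worth making explicit in the uniqueness step: the reason $b_n$ and $b_n'$ lift $\alpha^{p^{-n}}$ is that $\res(b_n)^{p^n}=\res(s(\alpha))=\alpha$ and Frobenius is injective on $k$, so $\res(b_n)$ is forced to be the unique $p^n$-th root of $\alpha$.
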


The proof can be found in many places,
for example
\cite[Lemma 7]{Coh46}.
In fact,
such a map $s$ is also multiplicative in a stronger sense, namely that
$s(\alpha)s(\beta)=s(\alpha\beta)$, for all $\alpha,\beta\in k^{(p^{\infty})}$,
cf.~\cite[Proposition 8(iii), \S4, Ch.~II]{Ser79}.

\begin{lemma}\label{lem:pm-representatives}
Let $(A,k)$ be a non-strict Cohen ring of characteristic $p^{m+1}$.
There is a unique choice of representatives
\begin{align*}
	s:k^{(p^{m})}\longrightarrow A^{(p^{m})}\subseteq A.
\end{align*}
\end{lemma}
\begin{proof}
Let $\alpha\in k$, let $a,b\in A$, and suppose that both $a^{p^{m}}$ and $b^{p^{m}}$ are representatives of $\alpha^{p^{m}}$.
Then both $a$ and $b$ are representatives of $\alpha$,
and in particular $a\equiv b\pmod{\mathfrak{m}}$.
By Lemma~\ref{lem:binomial},
$a^{p^{m}}\equiv b^{p^{m}}\pmod{\mathfrak{m}^{m+1}}$.
Since the characteristic of $A$ is $p^{m+1}$,
we have
$a^{p^{m}}=b^{p^{m}}$.
\end{proof}

The unique choice of representatives in Lemma~\ref{lem:pm-representatives} will be called the {\bf $p^{m}$-representatives}.

\subsection{\texorpdfstring{$\lambda$}{λ}-maps}
\label{section:lamba.maps}

A subset $\bfbeta\subseteq k$ is {\bf $p$-independent} over a subfield $C\subseteq k$ if
$[k^{(p)}C(\beta_{1},...,\beta_{r}):k^{(p)}C]=p^{r}$,
for all pairwise distinct elements
$\beta_{1},\dots,\beta_{r}\in\bfbeta$,
and for all $r\in\mathbb{N}$;
and $\bfbeta$ is a {\bf $p$-basis} over $C$ if furthermore
$k=k^{(p)}C(\bfbeta)$.
Equivalently, a $p$-basis is a maximal $p$-independent subset.
If $C$ is the prime field $\mathbb{F}_{p}$, then we just speak of `$p$-independence' and `$p$-bases'.
The cardinality of a $p$-basis of $k$ does not depend on the choice of any particular $p$-basis, and it is called the {\bf imperfection degree}\footnote{Imperfection degree is sometimes called {\em Ershov degree} or {\em $p$-degree}.} of $k$.
See \cite{Tei36a},
\cite{Mac39a}, and
\cite{Mac39b},
for more information on $p$-independence and $p$-bases.

Our next task is to develop the theory of $\lambda$-maps and $\lambda$-representatives with respect to arbitrary $p$-independent subsets $\bfbeta$, which certainly may be infinite, since in general the imperfection degree of a field may be any cardinal number.

For a cardinal $\nu$, and $n\in\mathbb{N}$,
we define
\begin{align*}
    P_{\nu,n}&:=\Big\{(i_{\mu})_{\mu<\nu}\;\Big|\;\mbox{$|\{\mu<\nu\;|\;i_{\mu}\neq0\}|<\infty$ and $\forall \mu<\nu,\;0\leq i_{\mu}<p^{n}$ }\Big\}
\end{align*}
to be the set of the multi-indices of finite support, in $\nu$-many elements,
and
in which each index is a non-negative integer strictly less that $p^{n}$.
In this context, `finite support' means that any such multi-index contains only finitely many non-zero indices.
We emphasise that this set is just a technical device to facilitate our analysis of $p$-independence,
and we will be mostly interested in the case $n=1$.
For
a $p$-independent set
$\bfbeta=(\beta_{\mu})_{\mu<\nu}$,
and $I=(i_{\mu})_{\mu<\nu}\in P_{\nu,n}$,
we write
\begin{align*}
    \beta^{I}&:=\prod_{\mu<\nu}\beta_{\mu}^{i_{\mu}}
\end{align*}
for the $I$-th monomial of $\bfbeta$.
The set $\{\beta^{I}\mid I\in P_{\mu,n}\}$ is a $k^{(p^{n})}$-linear basis of $k^{(p^{n})}(\bfbeta)$.
Therefore,
for each $\alpha\in k^{(p^{n})}(\bfbeta)$,
there is a unique family
$(\lambda^{I}_{\bfbeta}(\alpha))_{I\in P_{\nu,n}}$
of elements of $k$
such that
\begin{align}\label{eq:lambda}\tag{3.2.1}
    \alpha&=\sum_{I\in P_{\nu,n}}\beta^{I}\lambda^{I}_{\bfbeta}(\alpha)^{p^{n}}.
\end{align}
Note that this sum is finite since $\lambda^{I}_{\bfbeta}(\alpha)$ is zero for cofinitely many $I\in P_{\nu,n}$.
We refer to
\begin{align*}
    \lambda^{I}_{\bfbeta}:k^{(p^{n})}(\bfbeta)&\longrightarrow k\\
    \alpha&\longmapsto\lambda^{I}_{\bfbeta}(\alpha)
\end{align*}
as the $I$-th {\bf $\lambda$-map} with respect to $\bfbeta$.

\subsection{Representatives and subrings}

We prove in Proposition~\ref{prp:generation} that each Cohen ring of characteristic $p^{m+1}$ is generated by the union of the $p^{m}$-representatives and a set of representatives for a $p$-basis of its residue field; this is a key ingredient of
Theorem~\ref{thm:MIT}.
We begin with a lemma.

\begin{lemma}\label{lem:expansion}
Let $(A,k)$ be a non-strict Cohen ring of characteristic $p^{m+1}$.
Let $s:k\longrightarrow A$ be a choice of representatives.
Then every element of $A$ admits a unique representation as a sum $\sum_{i=0}^{m}s(\alpha_{i})p^{i}$, where $\alpha_{i}\in k$.
In particular, $A$ is generated as a ring by the image $s(k)$ of $s$.
\end{lemma}
\begin{proof}
We claim that, for all $n\in\mathbb{N}$ and for all $a\in A$, there exist $\alpha_{0},\ldots,\alpha_{n}\in k$
such that $a-\sum_{i=0}^{n}s(\alpha_{i})p^{i}\in\mathfrak{m}^{n+1}_{A}$.
We prove this by induction on $n$.
The base case $n=0$ follows from the fact that $s$ is a choice of representatives for $k$.
Suppose the statement holds for some $n\in\mathbb{N}$.
Let $a\in A$ and denote $\alpha:=\res(a)\in k$.
Let $\hat{a}\in A$ be such that $a=s(\alpha)+p\hat{a}$.
By the inductive hypothesis, there exist $\alpha_{0},\ldots,\alpha_{n}\in k$ such that $\hat{a}-\sum_{i=0}^{n}s(\alpha_{i})p^{i}\in\mathfrak{m}^{n+1}$.
Denote $b:=s(\alpha)+\sum_{i=0}^{n}s(\alpha_{i})p^{i+1}$.
Now $a-b=p\hat{a}-p\sum_{i=0}^{n}s(\alpha_{i})p^{i}\in\mathfrak{m}^{n+2}$.

The uniqueness part of the statement follows from the fact that multiplication by $p^{i}$ is an isomorphism between the additive groups $A/pA$ and $p^{i}A/p^{i+1}A$, for $i\leq m$.
If the characteristic of $A$ is $p^{m+1}$, then as soon as $m+1\leq n+1$, the claim shows that $A$ is generated by $s(k)$.
\end{proof}

\begin{proposition}\label{prp:generation}
Let $(A,k)$ be a non-strict Cohen ring characteristic $p^{m+1}$,
and let $\bfbeta$ be a $p$-basis of $k$ with representatives $s:\bfbeta\longrightarrow A$.
Denote by $s_{A}:k^{(p^{m})}\longrightarrow A^{(p^{m})}$ the choice of $p^{m}$-representatives from Lemma~\ref{lem:pm-representatives}.
Then $A$ is generated by $s(\bfbeta)\cup s_{A}(k^{(p^{m})})$.
\end{proposition}
\begin{proof}
Let $B:=[s(\bfbeta)\cup s_{A}(k^{(p^{m})})]$ be the subring of $A$ generated by the images of $s$ and $s_{A}$.
By Lemma~\ref{lem:expansion}, it suffices to show that $B$ contains a set of representatives for each element of $k$.
For  $I=(i_{\mu})_{\mu<\nu}\in P_{\nu,n}$,
we write
\begin{align*}
    s(\beta^{I})&:=\prod_{\mu<\nu}(s(\beta_{\mu}))^{i_{\mu}}.
\end{align*}
We now define a function
$S:k\longrightarrow A$
by setting 
\begin{align*}
	S(\alpha)&:=\sum_{I\in P_{\nu,m}}s(\beta^{I})s_{A}(\lambda^{I}_{\bfbeta}(\alpha)^{p^{m}}),
\end{align*}
for $\alpha \in k$.
Applying the residue map, and comparing with Equation~(\ref{eq:lambda}),
we can see that $S(\alpha)$ is a representative of $\alpha$.
Moreover, $S(\alpha)$ is an element of $B$, as required.
\end{proof}

\begin{remark}[Inverse systems with representatives]\label{rem:inverse_system_2}
Let $k$ be a field of characteristic $p$.
Let $(A_{m},k)_{m\in\mathbb{N}}$ be an inverse system of Cohen rings,
in the sense of Remark~\ref{rem:inverse_system}, where each $A_{m}$ has characteristic $p^{m+1}$,
and denote by $(\res_{n,m})_{n\geq m}$ the transition maps.
Let $\bfbeta$ be a $p$-basis of $k$ and, for each $m$, let $s_{m}:\bfbeta\longrightarrow A_{m}$ be a choice of representatives.
Suppose that the maps $s_{m}$ are compatible in the sense that $\res_{n,m}\circ s_{n}=s_{m}$, for $n\geq m$.
Then $(A_{m},k,s_{m})_{m\in\mathbb{N}}$ forms an inverse system.

Let $(A,k)$ be the inverse limit of $(A_{m},k)_{m\in\mathbb{N}}$ with projections $(r_{m})_{m\in\mathbb{N}}$.
By Remark~\ref{rem:inverse_system}, $(A,k)$ is a strict Cohen ring.
The inverse limit of the maps $s_{m}$ is a choice of representatives $s:\bfbeta\longrightarrow A$.
\end{remark}

\section{The Teichm\"{u}ller Embedding Process}

At the heart of all the structural arguments about Cohen rings is Teichm\"{u}ller's embedding process, which we discuss in this section.
The original formulation can be found in \cite[\S 7]{Tei36b}.
Indeed,
Mac Lane attributes this technique to Teichm\"{u}ller,
and describes it as the `Teichm\"{u}ller embedding process'.
See {\cite[Theorem 6]{Mac39c}} for Mac Lane's version.
In \cite[Lemma 12]{Coh46}, Cohen rewrote Teichm\"{u}ller's embedding process for an arbitrary complete local ring.

\begin{theorem}[{Teichm\"{u}ller Embedding Process}]\label{thm:TEP}
Let $(A,k)$ be a Cohen ring,
let $\bfbeta\subseteq k$ be $p$-independent
with representatives $s:\bfbeta\longrightarrow A$.
There exists a Cohen ring $(A^{T},k^{T})\supseteq(A,k)$ such that
\begin{enumerate}
\item $k^{T}=k(\bfbeta^{(p^{-\infty})})$,
where $\bfbeta^{(p^{-\infty})}=\{\beta^{p^{-n}}\mid\beta\in\bfbeta,n\in\mathbb{N}\}$,
\item $s$ coincides with the restriction to $\bfbeta$ of the unique choice of multiplicative representatives
$(k^{T})^{(p^{\infty})}\longrightarrow A^{T}$.
\end{enumerate}
\end{theorem}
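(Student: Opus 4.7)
The plan is to construct $A^T$ by an inductive process of adjoining $p$-power roots of the representatives $s(\beta)$, one element of $\bfbeta$ at a time, via transfinite recursion on a well-ordering of $\bfbeta$. Thus it suffices to describe a single-element construction and then iterate.

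First I would handle the case of a single $\beta \in \bfbeta$: starting from a Cohen ring $(C, \ell)$ in which $\beta \in \ell$ is $p$-independent and $\pi := s(\beta) \in C$, I extend $(C, \ell)$ to a Cohen ring $(C', \ell')$ with $\ell' = \ell(\beta^{1/p^\infty})$ in which $\pi$ is a multiplicative representative of $\beta$. To this end construct a tower $C = C_0 \subseteq C_1 \subseteq C_2 \subseteq \cdots$ with $C_n := C_{n-1}[Y]/(Y^p - Y_{n-1})$, where $Y_0 := \pi$ and $Y_n$ is the image of $Y$ in $C_n$. Each $C_n$ is a finite free $C$-module of rank $p^n$, hence Noetherian. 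The reduction of $Y^p - Y_{n-1}$ modulo $p$ is $X^p - \beta^{1/p^{n-1}}$, which is irreducible over $\ell(\beta^{1/p^{n-1}})$ by the standard fact that $[\ell(\beta^{1/p^n}):\ell] = p^n$ whenever $\beta$ is $p$-independent in $\ell$. Hence $C_n/pC_n \cong \ell(\beta^{1/p^n})$ is a field; by lying-over $C_n$ is local with maximal ideal $pC_n$; and being finite over the complete ring $C$, it is itself complete. So each $C_n$ is a Cohen ring.

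Next let $C_\infty := \bigcup_n C_n$ and let $C'$ denote its $p$-adic completion (which is $C_\infty$ itself in the non-strict case). In the strict case, $C_\infty$ is a union of discrete valuation rings with common uniformizer $p$, hence itself a discrete valuation ring; its completion $C'$ is a complete discrete valuation ring, and thus a Cohen ring, with residue field $\ell(\beta^{1/p^\infty})$. In the non-strict case of characteristic $p^m$, one has $p^m C_\infty = 0$, so no completion is needed, and a valuation-like argument (every nonzero element of $C_\infty$ takes the form $p^i u$ for some unit $u$ and $0 \leq i < m$) shows that the only ideals of $C_\infty$ are $(p^i)$ for $0 \leq i \leq m$, so $C_\infty$ is Artinian, Noetherian, and a Cohen ring. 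In both characteristics, the sequence $(Y_n)_n$ exhibits $\pi = s(\beta)$ as a $p^n$-th power in $C'$ for every $n$, so $s(\beta)$ is a multiplicative representative of $\beta$ in $C'$; by Theorem \ref{thm:representatives} it must coincide with the unique such representative.

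Having handled the single-element case, I iterate along a chosen well-ordering of $\bfbeta$: at successor ordinals $\gamma+1$ apply the single-element construction to $A^{(\gamma)}$ and $\beta_\gamma$; at limit ordinals $\lambda$ let $A^{(\lambda)}$ be the $p$-adic completion of $\bigcup_{\gamma<\lambda}A^{(\gamma)}$. The final $A^T$ contains $A$ by construction, has residue field $k(\bfbeta^{p^{-\infty}}) = k^T$, and each $s(\beta)$ is a multiplicative representative of $\beta$ (which, by Theorem \ref{thm:representatives} applied to $A^T$, equals the unique multiplicative representative). The main obstacles are (a) verifying Noetherianity of the unions appearing throughout the iteration, especially at limit stages in the non-strict case, where it rests on the strict control of ideals by powers of $p$; and (b) showing that each not-yet-handled $\beta \in \bfbeta$ remains $p$-independent over the successively enlarged residue fields $k(\{\beta_\delta^{1/p^\infty} : \delta<\gamma\})$, which is an auxiliary but non-trivial property of $p$-independence under purely inseparable extensions by $p^\infty$-th roots.
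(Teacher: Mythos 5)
Your construction processes $\bfbeta$ one element at a time via transfinite recursion on a well-ordering of $\bfbeta$; the paper instead adjoins a $p$-th root of \emph{every} $s(\beta)$, $\beta\in\bfbeta$, simultaneously at each step, yielding an $\omega$-indexed chain $(A_n,k_n)_{n\in\mathbb{N}}$ with $A_n = A_{n-1}[X_\beta:\beta\in\bfbeta]/\big(X_\beta^p - s_{n-1}(\beta^{p^{-(n-1)}}):\beta\in\bfbeta\big)$ and $k_n = k(\bfbeta^{p^{-n}})$, followed by the direct limit and its completion. Your single-element tower is correct as a special case, and your overall strategy would work, but the two obstacles you list at the end are genuine and your proposal leaves them open. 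The paper's all-at-once organization dissolves both. For (a): since the chain is indexed by $\mathbb{N}$ there are no limit ordinals, and the one union $A_\infty=\bigcup_n A_n$ is controlled exactly as in your single-element argument (ideals are $(p^i)$ in finite characteristic; a union of DVRs with common uniformizer $p$ in the strict case), after which one simply completes. For (b): the inductive claim becomes the uniform statement that $\bfbeta^{p^{-n}}$ remains $p$-independent in $k(\bfbeta^{p^{-n}})$, which involves only one layer of $p$-th roots applied to all of $\bfbeta$ at once and reduces to the observation that $\beta_i\notin k^{p^2}(\bfbeta^p)(\beta_1,\dots,\beta_{i-1})\subseteq k^p(\beta_1,\dots,\beta_{i-1})$; by contrast your scheme needs $\beta_\gamma$ to stay $p$-independent over $k(\{\beta_\delta^{p^{-\infty}}:\delta<\gamma\})$, a $p^\infty$-th-root statement over an initial segment, which is also true but strictly harder and not proved here. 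In short: same theorem, genuinely different decomposition; the $\omega$-chain over all of $\bfbeta$ is the cleaner route, and to finish your transfinite version you would still need to establish the stronger $p$-independence preservation lemma.
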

\begin{proof}
This proof is closely based on those of Teichm\"{u}ller (\cite[\S7]{Tei36b}) and Cohen (\cite[Lemma 12]{Coh46}).
It is a recursive construction.
We begin by formally adjoining a $p$-th root of each $s(\beta)$, for each $\beta\in\bfbeta$.
More constructively, we introduce a family of new variables $(X_{\beta}:\beta\in\bfbeta)$, and let
$$
    A':=A[X_{\beta}:\beta\in\bfbeta]/\big(X_{\beta}^{p}-s(\beta):\beta\in\bfbeta\big).
$$
That is, $A'$ is the quotient of the ring 
$A[X_{\beta}:\beta\in\bfbeta]$ 
by the ideal generated by the polynomials $X_{\beta}^{p}-s(\beta)$, for $\beta\in\bfbeta$.
The natural map $A\longrightarrow A'$ is injective, and we identify $A$ with its image in $A'$.

Taking the quotient of $A'$ by $pA'$ yields the field
$k':=k(\beta^{p^{-1}}:\beta\in\bfbeta)$,
and so $pA'$ is maximal.
Indeed, since $A$ is local with unique maximal ideal $pA$, the maximal ideals of $A'$ are those lying over $pA$,
which shows that $pA'$ is the unique maximal ideal of $A'$.
Thus $(A',k')$ is a pre-Cohen ring,
and we have $(A,k)\subseteq (A',k')$.
Note that $\bfbeta^{(p^{-1})}:=\{\beta^{p^{-1}}\mid\beta\in\bfbeta\}$
is $p$-independent in $k'$.
Indeed, for each $\beta\in\bfbeta$, we write $s'(\beta^{p^{-1}})$ for the image of $X_{\beta}$ in the quotient ring $A'$.
Then $s':\bfbeta^{(p^{-1})}\longrightarrow A'$ is a choice of representatives,
and
$$
    s'(\beta^{p^{-1}})^{p}=s(\beta),
$$
for all $\beta\in\bfbeta$.

Beginning with $(A,k)$, we 
continue this process recursively, with recursive step
$(A,k)\longmapsto(A',k')$.
In this way, we construct a chain
$(A_{n},k_{n})_{n\in\mathbb{N}}$
of pre-Cohen rings,
such that $\bfbeta^{(p^{-n})}:=\{\beta^{p^{-n}}\mid\beta\in\bfbeta\}$ is $p$-independent in $k_{n}=k(\bfbeta^{(p^{-n})})$ and 
$s_{n}:\bfbeta^{(p^{-n})}\longrightarrow A_{n}$ is a choice of representatives, such that
\begin{align*}
    s_{n}(\beta^{p^{-n}})^{p^{n}}&=s(\beta),
\end{align*}
for all $n\in\mathbb{N}$ and all $\beta\in\bfbeta$.

The morphisms in this chain are embeddings, which we may even view as inclusions, by identifying of each $(A_{n},k_{n})$ with its image in $(A_{n+1},k_{n+1})$.
The direct limit is a pre-Cohen ring $(A_{\infty},k_{\infty})\supseteq(A,k)$.
Taking the completion, we obtain a Cohen ring $(A^{T},k^{T})\supseteq(A,k)$.
The union $s^{T}:=\bigcup_{n}s_{n}$ is a choice of representatives for
$\bfbeta^{T}:=\bigcup_{n}\bfbeta^{(p^{-n})}$
which commutes with the Frobenius map.
By construction, we have $k^{T}=k(\bfbeta^{(p^{-\infty})})$,
and so $\bfbeta^{T}\subseteq (k^{T})^{(p^{\infty})}$.
Therefore $s^{T}$ coincides with the restriction to $\bfbeta^{T}$ of the unique choice of multiplicative representatives $(k^{T})^{(p^{\infty})}\longrightarrow A^{T}$,
as required.
\end{proof}

\section{Mac Lane's Identity Theorem}

In this section we consider Cohen subrings of Cohen rings.
We study the `identity' of such subrings inside their overrings:
in Theorem \ref{thm:MIT},
which was first clearly articulated by Mac Lane,
we show that such a subring is determined by a choice of representatives of a $p$-basis of its residue field.

Teichm\"{u}ller's discussion of this issue can be found in \cite[\S8]{Tei36b}.
Developing these ideas, Mac Lane's theorems
\cite[Theorem 7]{Mac39c}
and
\cite[Theorem 12]{Mac39c}
show that a complete subfield of a $\mathfrak{p}$-adic field, in his language, is determined by a choice of representatives for a $p$-basis of the residue field.
Indeed, in our view, Mac Lane is the first to have clearly articulated this portion of the overall argument.
Nevertheless, we closely follow Cohen's exposition, particularly relevant parts of his proof of \cite[Theorem 11]{Coh46},
which is in fact the theorem we will discuss in the next section.

\begin{theorem}[{Mac Lane's Identity Theorem}]\label{thm:MIT}
Let $(B,l)$ be a pre-Cohen ring, 
with pre-Cohen subrings $(A_{1},k_{1})$ and $(A_{2},k_{2})$.
Suppose that
$(A_{2},k_{2})$ is a Cohen ring,
i.e.~is complete,
and that
$k_{1}\subseteq k_{2}$.
Let $\bfbeta$ be a $p$-basis of $k_{1}$
with representatives
$s:\bfbeta\longrightarrow A_{1}$.
If $s(\bfbeta)\subseteq A_{2}$
then $(A_{1},k_{1})\subseteq (A_{2},k_{2})$.
\end{theorem}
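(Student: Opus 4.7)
The plan is to prove $A_1 \subseteq A_2$ via two steps: first, show that for every $\alpha \in k_1$ the unique $\lambda(s)$-representative of $\alpha$ in $A_1$ coincides, as an element of $B$, with the unique $\lambda(s)$-representative of $\alpha$ in $A_2$; second, use a $p$-adic digit expansion to write any $a \in A_1$ as a (finite or convergent) sum of such $\lambda$-representatives scaled by powers of $p$, and conclude that $a$ lies in $A_2$.

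For the first step, the inclusions $A_1 \subseteq A_2 \subseteq B$ force all three rings to share a common characteristic (either $0$ or some $p^m$). Separability of $k_2/k_1$ ensures that $\bfbeta$ remains $p$-independent in $k_2$; together with the fact that $\bfbeta$ is a $p$-basis of $k_1$, this gives $k_1 = k_1^{p^m}(\bfbeta) \subseteq k_2^{p^m}(\bfbeta)$ for every $m$, so that $s$ induces $\lambda(s)$-representatives in both Cohen rings $A_1$ and $A_2$ for every $\alpha \in k_1$ (Theorem \ref{thm:lambda.representatives}). Let $a$ and $a'$ denote these $\lambda(s)$-representatives in $A_1$ and $A_2$ respectively. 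Because $\bfbeta$ is $p$-independent in both $k_1$ and $k_2$, uniqueness of the $p$-basis expansion forces $\lambda_I^{\bfbeta}(\alpha)$ to be the same element in $k_1$ and in $k_2$. Writing defining expressions $a \equiv \sum_I s(\beta^I) l_I^{p^m} \pmod{\mathfrak{m}_{A_1}^m}$ and $a' \equiv \sum_I s(\beta^I) {l'_I}^{p^m} \pmod{\mathfrak{m}_{A_2}^m}$, the equality of residues in $B$ yields $l_I - {l'_I} \in \mathfrak{m}_B$, and Lemma \ref{lem:binomial} upgrades this to $l_I^{p^m} - {l'_I}^{p^m} \in \mathfrak{m}_B^{m+1}$, so that $a - a' \in \mathfrak{m}_B^m$ for every $m$ for which both sides are defined. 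In the non-strict case ($\mathrm{char} = p^m$) we have $\mathfrak{m}_B^m = p^m B = 0$ and hence $a = a'$; in the strict case, applying the estimate for every $m$ and invoking Krull's Lemma \ref{lem:Noetherian} gives $a - a' \in \bigcap_m \mathfrak{m}_B^m = 0$. Either way, $a = a' \in A_2$.

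For the second step, given $a \in A_1$, recursively set $a_0 := a$, let $b_n$ be the $\lambda(s)$-representative in $A_1$ of $\res(a_n)$, and choose any $a_{n+1} \in A_1$ with $p a_{n+1} = a_n - b_n$ (possible since $a_n - b_n \in p A_1$). Then $a = \sum_{n < N} p^n b_n + p^N a_N$ for every $N$. In the non-strict case, taking $N$ to be the characteristic exponent terminates the sum inside $A_2$ by Step 1. In the strict case, each $b_n \in A_2$ by Step 1, so the partial sums converge in the complete Cohen ring $A_2$ to some $a'' \in A_2$; they also converge to $a$ in $A_1 \subseteq B$, and Hausdorffness of $B$ (Lemma \ref{lem:Noetherian}) forces $a = a''$. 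The main obstacle is Step 1, specifically pinning down the identification of the two $\lambda$-representatives inside $B$, which combines Lemma \ref{lem:binomial} with Krull's Lemma applied to the ambient ring $B$.
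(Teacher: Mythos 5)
Your proof is correct and takes essentially the same approach as the paper's, which also reduces to showing that the $\lambda(s)$-representatives of $\alpha\in k_{1}$ computed in $A_{1}$ and in $A_{2}$ coincide as elements of $B$ (the paper's unelaborated assertion that ``$S_{2}$ extends $S_{1}$''), and then builds up an arbitrary $a\in A_{1}$ inductively as a $p$-adic limit of elements of $A_{2}$. You additionally supply the justification for the coincidence of representatives, via Lemma~\ref{lem:binomial} together with Krull's intersection theorem applied in $B$, a point the paper leaves implicit.
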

\begin{proof}
First we work in the case that $(B,l)$ is non-strict of characteristic $p^{m+1}$.
For $i\in\{1,2\}$,
let $s_{i}:k_{i}^{(p^{m})}\longrightarrow A_{i}^{(p^{m})}\subseteq A_{i}$, and let $s_{B}:l^{(p^{m})}\longrightarrow B^{(p^{m})}\subseteq B$,
be the unique choices of $p^{m}$-representatives from Lemma~\ref{lem:pm-representatives}.
Let $\alpha\in k_{i}^{(p^{m})}$.
By Lemma~\ref{lem:pm-representatives}, $s_{B}(\alpha)$ is the unique element of $B^{(p^{m})}$ with residue $\alpha$; but $s_{i}(\alpha)$ is another such element that happens to lie in $A_{i}^{(p^{m})}\subseteq B^{(p^{m})}$.
Therefore $s_{i}(\alpha)=s_{B}(\alpha)$,
which means that $s_{B}$ extends $s_{i}$.

Since $k_{1}\subseteq k_{2}\subseteq l$, we have $k_{1}^{(p^{m})}\subseteq k_{2}^{(p^{m})}\subseteq l^{(p^{m})}$.
It follows that $s_{2}$ extends $s_{1}$.
In particular, the image $s_{1}(k_{1}^{(p^{m})})$ of $s_{1}$ is contained in the image of $s_{2}$,
which in turn is contained in $A_{2}$.
Also note that $s(\bfbeta)\subseteq A_{2}$, by assumption.
By Proposition~\ref{prp:generation}, the subring of $B$ generated by $s_{1}(k_{1}^{(p^{m})})\cup s(\bfbeta)$ is $A_{1}$.
Therefore $A_{1}\subseteq A_{2}$.

Finally, we suppose that $(B,l)$ is strict.
For all $m\in\mathbb{N}$,
by the preceding paragraph,
the residue ring of $(A_{1},k_{1})$ of characteristic $p^{m+1}$
is a subring of the corresponding residue ring of $(A_{2},k_{2})$.
Since $(A_2,k_2)$ is complete by assumption,
Remark \ref{rem:inverse_system} implies that
$(A_{1},k_{1})$ is a subring of $(A_{2},k_{2})$.
\end{proof}

\section{Cohen's Homomorphism Theorem and Structure Theorem}
\label{section:structure}

The remaining ingredient of a structure theorem is the relationship between two arbitrary Cohen rings with the same residue field.
Such a relationship exists, in the form of a morphism, and such a morphism is uniquely determined by specifying the image of a set of representatives of a $p$-basis of the residue field.

Cohen's paper \cite{Coh46} appears to be the first to study the case of characteristic $p^{m}$, $m>0$.
In this section we state and prove a version of Cohen's Theorem, 
\cite[Theorem 11]{Coh46}, suitable for our setting.

\begin{definition}\label{def:lift}
Let $(A,k)$ and $(B,l)$ be pre-Cohen rings,
and let $\varphi=(\varphi_{A},\varphi_{k}):(A,k)\longrightarrow(B,l)$ be a morphism.
Also, let $\bfbeta\subseteq k$ be a $p$-basis of $k$,
and let $s_{A}:\bfbeta\longrightarrow A$ and $s_{B}:\varphi_{k}(\bfbeta)\longrightarrow B$
be representatives.
We say that $\varphi$
{\bf respects}
$s_{A}$ and $s_{B}$ if
$\varphi_{A}\circ s_{A}=s_{B}\circ\varphi_{k}|_{\bfbeta}$.
\end{definition}

\begin{figure}[ht]
$$
\renewcommand{\labelstyle}{\textstyle}
    \xymatrix@=3em{
        B\ar@{.>}[rrr]^{\res_{B}}
        &&&
        l
        \ar@{:>}@/^2pc/[lll]^(0.50){s_{B}}
        \\
        &&&
        \\
        A\ar@{.>}[rrr]^{\res_{A}}
        \ar@{->}[uu]_{}^{\varphi_{A}}
        &&&
        k\ar@{->}[uu]_{}^{\varphi_{k}}
        \ar@{:>}@/^2pc/[lll]^(0.50){s_{A}}
    }
$$
\caption{Illustration of Definition \ref{def:lift}}
\label{fig:3}
\end{figure}
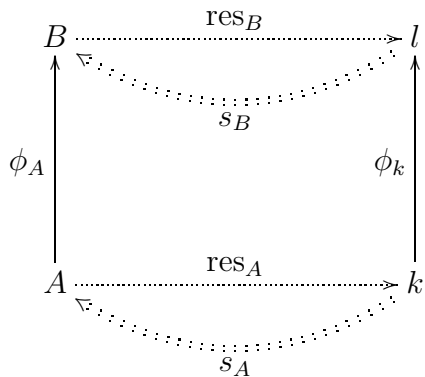

\begin{theorem}[{Cohen's Homomorphism Theorem}]\label{thm:CoHo}
Let $(A,k)$ and $(B,l)$ be Cohen rings,
and let $\varphi_{k}:k\longrightarrow l$ be an embedding of fields such that $l/\varphi_{k}(k)$ is a separable extension.
Let $\bfbeta$ be a $p$-basis of $k$ and let $s_{A}:\bfbeta\longrightarrow A$ and $s_{B}:\varphi_{k}(\bfbeta)\longrightarrow B$ be representatives.
Suppose that $(A,k)$ is strict or that $(A,k)$ is non-strict of characteristic at least
that of $(B,l)$.
Then there exists a unique ring homomorphism $\varphi_{A}:A\longrightarrow B$ such that
$$
    \varphi=(\varphi_{A},\varphi_{k}):(A,k)\longrightarrow(B,l)
$$
is a morphism which respects $s_{A}$ and $s_{B}$.

Moreover,
if $(A,k)$ and $(B,l)$ have the same characteristic, then $\varphi$ is an embedding.
\end{theorem}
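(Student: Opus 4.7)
My plan is to first pin down uniqueness by exploiting the fact that any ring homomorphism respecting representatives must send $\lambda(s_A)$-representatives to $\lambda(s_B)$-representatives, and then establish existence by lifting through the Teichm\"{u}ller Embedding Process (Theorem \ref{thm:TEP}) to the case of perfect residue fields, where the classical Witt vector construction supplies a ring homomorphism which I will then restrict to a map $A\longrightarrow B$.

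For uniqueness, the first point is that $\phi_k$ intertwines the $\lambda$-maps, i.e.\ $\phi_k\circ\lambda_I^{\bfbeta}=\lambda_I^{\phi_k(\bfbeta)}\circ\phi_k$; this follows by applying $\phi_k$ to the defining identity $\alpha=\sum_I\beta^I\lambda_I^{\bfbeta}(\alpha)^{p^m}$ and invoking the uniqueness of $\lambda$-coordinates in $l$. Given any $\phi_A$ satisfying the hypotheses and any $\alpha\in k$, since $S_A(\alpha)\in U_m^{s_A}(\alpha)$, applying $\phi_A$ (and using $\phi_A(\mathfrak{m}_A^m)\subseteq\mathfrak{m}_B^m$) shows $\phi_A(S_A(\alpha))\in U_m^{s_B}(\phi_k(\alpha))$ for every $m$. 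Theorem \ref{thm:lambda.representatives} then forces $\phi_A(S_A(\alpha))=S_B(\phi_k(\alpha))$, in both the strict and non-strict cases for $B$. Since $A$ is strict and $\bfbeta$ is a $p$-basis, iterating $a\mapsto(a-S_A(\res(a)))/p$ yields a unique expansion $a=\sum_{n\geq 0}p^n S_A(\alpha_n)$ with $\alpha_n\in k$, so $\phi_A(a)=\sum_n p^n S_B(\phi_k(\alpha_n))$ is completely determined.

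For existence, $\phi_k(\bfbeta)$ is $p$-independent in $l$ (by separability of $l/\phi_k(k)$), so I extend it to a $p$-basis $\bfbeta'$ of $l$, extend $s_B$ to some $s_B':\bfbeta'\longrightarrow B$, and apply the Teichm\"{u}ller Embedding Process to both sides, obtaining Cohen overrings $(A^T,k^T)\supseteq(A,k)$ and $(B^T,l^T)\supseteq(B,l)$ with $k^T=k^{p^{-\infty}}$ and $l^T=l^{p^{-\infty}}$ perfect, inside which $s_A$ and $s_B'$ coincide with the multiplicative representatives on $\bfbeta$ and $\bfbeta'$ respectively. The field map $\phi_k$ extends uniquely to $\phi_k^T:k^T\longrightarrow l^T$ by taking $p^n$-th roots. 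Over these perfect residue fields the classical Witt vector construction yields a unique ring homomorphism $\phi^T:A^T\longrightarrow B^T$ lifting $\phi_k^T$ and commuting with multiplicative representatives (using the uniqueness in Theorem \ref{thm:representatives}).

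To finish, I restrict $\phi^T$ to $A$. For $\alpha\in k\subseteq k^T$, Remark \ref{rem:representatives} identifies $S_A(\alpha)\in A$ with the unique multiplicative representative of $\alpha$ in $A^T$, and similarly for $S_B(\phi_k(\alpha))$ in $B^T$. Therefore $\phi^T(S_A(\alpha))=S_B(\phi_k(\alpha))\in B$, and expanding $a=\sum_n p^n S_A(\alpha_n)\in A$ gives $\phi^T(a)=\sum_n p^n S_B(\phi_k(\alpha_n))\in B$ by completeness of $B$ (or finiteness of the sum, when $B$ is non-strict). Setting $\phi_A:=\phi^T|_A$ yields the required morphism. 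The moreover clauses are then quick: if $B$ is strict, then $A$ being a DVR forces $\ker\phi_A\in\{0,pA\}$, and $\ker\phi_A=pA$ would give $p=0$ in $B$, contradicting strictness; if $\mathrm{char}(B)=p^m$, then $\phi_A(p^m)=0$, so $\phi_A$ annihilates $\mathfrak{m}_A^m=p^m A$. I expect the main difficulty to be establishing $\phi^T(A)\subseteq B$, which hinges on the identification of $S_A(\alpha)$ with the multiplicative representative in $A^T$ for $\alpha\in k$; this identification is precisely what allows the Witt-vector ring structure over the perfect residue field to be harnessed in the imperfect setting.
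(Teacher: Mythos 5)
Your overall strategy mirrors the paper's at the largest scale (reduce to perfect residue fields via the Teichm\"uller Embedding Process), and your uniqueness argument via $p$-adic expansion $a=\sum_n p^n S_A(\alpha_n)$ is a correct and somewhat more direct alternative to the paper's use of Mac Lane's Identity Theorem. But the descent step contains a genuine gap.

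You claim, citing Remark \ref{rem:representatives}, that for every $\alpha\in k$ the $\lambda(s_A)$-representative $S_A(\alpha)\in A$ equals the unique \emph{multiplicative} representative of $\alpha$ in $A^T$. Remark \ref{rem:representatives} asserts this only for $\alpha\in k^{p^\infty}$, and the general claim is false. Take $k=\mathbb{F}_p(t)$, $\bfbeta=\{t\}$, $s_A(t)=T$, $\alpha=1+t$. The $\lambda$-coordinates of $1+t$ are $\lambda_0=\lambda_1=1$, $\lambda_i=0$ for $i\geq 2$, so $U_m^{s_A}(1+t)=1+T+\mathfrak{m}_A^m$ and $S_A(1+t)=1+T$. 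But $1+T$ is \emph{not} a $p$-th power in $A^T$: any $c$ with $\res(c)^p=1+t$ satisfies $c\equiv 1+T^{1/p}\pmod{\mathfrak{m}_{A^T}}$, hence $c^p\equiv(1+T^{1/p})^p\pmod{\mathfrak{m}_{A^T}^2}$, and $(1+T^{1/p})^p-(1+T)=\sum_{j=1}^{p-1}\binom{p}{j}T^{j/p}$ is a unit times $p$, so $c^p\not\equiv 1+T\pmod{\mathfrak{m}_{A^T}^2}$. Thus $S_A(1+t)$ is not the multiplicative representative of $1+t$ in $A^T$, and your chain of equalities $\phi^T(S_A(\alpha))=\phi^T(s^T_A(\alpha))=s^T_B(\phi_k(\alpha))=S_B(\phi_k(\alpha))$ breaks at the first link.

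The conclusion $\phi^T(S_A(\alpha))=S_B(\phi_k(\alpha))\in B$ is in fact true, but requires a different argument: one tracks the sets $U_m$ themselves. Applying $\phi^T$ to the inclusion $S_A(\alpha)\in U_m^{s_A}(\alpha)$, and using $\phi^T(s_A(\beta))=s_B(\phi_k(\beta))$ (which does follow from TEP and the preservation of multiplicative representatives by ring homomorphisms, since $s_A(\beta)$ and $s_B(\phi_k(\beta))$ \emph{are} multiplicative representatives), one sees both $\phi^T(S_A(\alpha))$ and $S_B(\phi_k(\alpha))$ lie in the set $\sum_I s_B(\phi_k(\beta)^I)\bigl(\res_{B^T}^{-1}(\phi_k\lambda_I^{\bfbeta}(\alpha))\bigr)^{(p^m)}+\mathfrak{m}_{B^T}^m$; the argument of Lemma \ref{lem:lambda.representatives}\III\ (which only uses the congruence from Lemma \ref{lem:binomial}, not $p$-independence in the ambient residue field) shows any two elements of this set differ by $\mathfrak{m}_{B^T}^m$, giving the equality as $m\to\infty$. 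This is essentially what Mac Lane's Identity Theorem (\ref{thm:MIT}) packages up, and the paper's proof invokes it directly for the descent instead of attempting to route through multiplicative representatives. Finally, a small slip in the moreover clause: the ideals of the DVR $A$ are $\{0\}\cup\{p^nA:n\in\mathbb{N}\}$, not $\{0,pA\}$, though any nonzero kernel $p^nA$ still forces $p^n=0$ in $B$, so the argument survives.
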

\begin{proof}
This proof is a construction closely 
based on that of Cohen (\cite[Theorem 11]{Coh46}).
For notational simplicity, we identify $k$ with its image in $l$ under the embedding $\varphi_{k}$.
Then $\varphi_{k}$ is the inclusion map $\mathrm{id}$, and $l/k$ is a separable extension.
It suffices to construct a ring homomorphism
$A\longrightarrow B$, which induces the inclusion map $k\longrightarrow l$.
Throughout, we denote the residue maps by $\mathrm{res}_A: A \longrightarrow k$
and $\mathrm{res}_B: B \longrightarrow l$  respectively.

To begin with, we suppose that $(A,k)$ is strict and that $k$ is perfect. 
Thus $\bfbeta$ is empty, and we dispense with both of the maps $s_{A}$ and $s_{B}$.
Since $(A,k)$ is a strict Cohen ring, we have
$(\mathbb{Z}_{p},\mathbb{F}_{p})\subseteq(A,k)$,
and there is the following natural ring homomorphism:
\begin{align*}
    \varphi_{0}:\mathbb{Z}_{p}\longrightarrow B.
\end{align*}
Let $T$ be a transcendence basis of $k/\mathbb{F}_{p}$.
Since $k$ is perfect, we have
$T\subseteq k^{(p^{\infty})}=k\subseteq l^{(p^{\infty})}\subseteq l$.
By Theorem \ref{thm:representatives},
there are unique choices of multiplicative representatives:
$s_{A,0}:k\longrightarrow A$
and
$s_{B,0}:l^{(p^{\infty})}\longrightarrow B$.
Note that $s_{A,0}(T)$ is algebraically independent over $\mathbb{Z}_{p}$
since $T$ is algebraically independent over $\mathbb{F}_{p}$.
We consider the subring $\mathbb{Z}_{p}[s_{A,0}(T)]\subseteq A$,
with $\res_{A}(\mathbb{Z}_{p}[s_{A,0}(T)])=\mathbb{F}_{p}[T]$.
We may extend $\varphi_{0}$ to a ring homomorphism
\begin{align*}
    \varphi_{1,0}:\mathbb{Z}_{p}[s_{A,0}(T)]\longrightarrow B
\end{align*}
by declaring $\varphi_{1,0}(s_{A,0}(t))=s_{B,0}(t)$, for each $t\in T$.
In fact, for each $n\in\mathbb{N}$,
we consider the subring $\mathbb{Z}_{p}[s_{A,0}(T^{(p^{-n})})]\subseteq A$,
with $\res_A(\mathbb{Z}_{p}[s_{A,0}(T^{(p^{-n})})])=\mathbb{F}_{p}[T^{(p^{-n})}]$.
As in the case $n=0$, we construct a ring homomorphism
\begin{align*}
    \varphi_{1,n}:\mathbb{Z}_{p}[s_{A,0}(T^{(p^{-n})})]\longrightarrow B,
\end{align*}
by declaring $\varphi_{1,n}(s_{A,0}(t^{p^{-n}}))=s_{B,0}(t^{p^{-n}})$, for each $t\in T$.
Since $s_{A,0}$ and $s_{B,0}$ are multiplicative,
the family $(\varphi_{1,n})_{n\in\mathbb{N}}$ of ring homomorphisms forms an increasing chain.
Taking the direct limit (i.e.\ union), we obtain the subring
$A_{0}:=\mathbb{Z}_{p}[s_{A,0}(T^{(p^{-n})})\;|\;n\in\mathbb{N}]\subseteq A$,
with $\res_{A}(A_{0})=\mathbb{F}_{p}[T^{(p^{-n})}\mid n\in\mathbb{N}]$,
and we obtain the ring homomorphism
\begin{align*}
    \varphi_{2}:A_{0}\longrightarrow B.
\end{align*}
Localising $A_{0}$ at $A_{0}\cap pA$,
we obtain the local ring $A_{1}:=(A_{0})_{A_{0}\cap pA}\subseteq A$,
with $\res_{A}(A_{1})=\mathbb{F}_{p}(T)^{\mathrm{perf}}$,
and we extend $\varphi_{2}$ to a ring homomorphism
\begin{align*}
    \varphi_{3}:A_{1}\longrightarrow B.
\end{align*}

The final part of this construction is to extend $\varphi_{3}$ to have domain $A$.
Since strict Cohen rings are henselian valuation rings, and $k/\mathbb{F}_{p}(T)^{\mathrm{perf}}$ is separable algebraic,
this prolongation can be accomplished by a direct application of Hensel's Lemma, as in e.g.~\cite[Lemma 9.30]{Kuhlmann2011}.
More precisely, for a separable irreducible polynomial $f\in A_{1}[X]$ and $\alpha\in k$ with $\mathrm{res}_{A}(f)(\alpha)=0$, by Hensel's Lemma we obtain $a\in A$ such that $f(a)=0$.
Likewise, we obtain $b\in B$ with $\varphi_{3}(f)(b)=0$.
We now extend $\varphi_{3}$ to a morphism
\begin{align*}
    \varphi_{4}:A_{1}[a]&\longrightarrow B
\end{align*}
by sending $a\longmapsto b$.
Note that $\res_{A}(A_{1}[a])=\mathbb{F}_{p}(T)^{\mathrm{perf}}(\alpha)$.
Taking the direct limit of ring homomorphisms constructed in this way,
we obtain a ring homomorphism
\begin{align*}
    \varphi:A&\longrightarrow B
\end{align*}
that induces the inclusion map on the residue fields
and respects $s_{A}$ and $s_{B}$,
as required.
It remains to show that $\varphi$ is the unique such morphism, but this follows from Theorem \ref{thm:MIT}, applied to the case that $\bfbeta$ is empty.

Remaining under the assumption that $(A,k)$ is strict, we turn to the case that $k$ is imperfect.
We are given a $p$-basis
$\bfbeta$ of $k$
with 
representatives
$s_{A}:\bfbeta\longrightarrow A$
and
$s_{B}:\bfbeta\longrightarrow B$.
Note that $\bfbeta$ is $p$-independent in $l$, by our assumption that $l/k$ is separable.
By Theorem \ref{thm:TEP},
there exists a Cohen ring
$(A^{T},k^{T})\supseteq(A,k)$
such that
\begin{enumerate}
\item $k^{T}=k^{\mathrm{perf}}$, and
\item $s_{A}$ is the restriction to $\bfbeta$ of the multiplicative representatives $s_{A}^{T}:k^{T}\longrightarrow A^{T}$.
\end{enumerate}
By another application of Theorem \ref{thm:TEP},
there exists a Cohen ring
$(B^{T},l^{T})\supseteq(B,l)$
such that
\begin{enumerate}
\setcounter{enumi}{2}
\item $l^{T}=l(\bfbeta^{(p^{-\infty})})$, and
\item $s_{B}$ is the restriction to $\bfbeta$ of the multiplicative representatives $s_{B}^{T}:(l^{T})^{(p^{\infty})}\longrightarrow B^{T}$.
\end{enumerate}
Since $k^{T}\subseteq l^{T}$ and $k^{T}$ is perfect, by the first part of this proof there exists a unique morphism
\begin{align*}
    \varphi:A^{T}\longrightarrow B^{T}
\end{align*}
that induces the inclusion map on the residue fields.

By {\bf(ii)}, the composition $\varphi\circ s_{A}:\bfbeta\longrightarrow\varphi(A^{T})$ coincides with the unique choice of multiplicative representatives for $\bfbeta$ in $\varphi(A^{T})$;
and by {\bf(iv)}, also $s_{B}$ coincides with the unique choice of multiplicative representatives for $\bfbeta$ in $B^{T}$.
Applying Theorem~\ref{thm:representatives},
and since $\varphi((A^{T})^{(p^{\infty})})\subseteq(B^{T})^{(p^{\infty})}$,
we have $\varphi\circ s_{A}=s_{B}$.
If follows that both subrings $\varphi(A)$ and $B$ of $B^{T}$ contain $\varphi(s_{A}(\bfbeta))=s_{B}(\bfbeta)$.
Since also $k\subseteq l$, we may apply Theorem~\ref{thm:MIT} to deduce that $\varphi(A)\subseteq B$.
Therefore
$\varphi$ restricts to a ring homomorphism $A\longrightarrow B$ that induces the inclusion map on the residue fields
and respects $s_{A}$ and $s_{B}$.
The uniqueness of $\varphi$ again follows from Theorem \ref{thm:MIT}.
Note that, the non-trivial proper ideals of $A$ are $\mathfrak{m}_{C}^{n}$, for $n>0$, and the quotient $A/\mathfrak{m}_{C}^{n}$ has characteristic $p^{n}$.
Thus the morphism we have constructed is an embedding if and only if $(B,l)$ is also strict.  
This completes the proof in the case that $(A,k)$ is strict.

Finally, we turn to the case that $(A,k)$ and $(B,l)$ are non-strict, of characteristics $p^{n}$ and $p^{m}$, respectively, for $n\geq m$.
By Theorem~\ref{thm:HS}, there exists a strict Cohen ring $(C,k)$.
Choose representatives
$s_{C}:\bfbeta\longrightarrow C$.
By the first part of this Theorem, there is a unique ring homomorphism
$\varphi:C\longrightarrow A$
which induces the identity map on the residue field and respects $s_{C}$ and $s_{A}$.
Again note that the non-trivial proper ideals in $(C,k)$ are $\mathfrak{m}_{C}^{n}$, for $n\in\mathbb{N}_{>0}$.
Therefore $\varphi$ is the composition of the quotient $C\longrightarrow C/\mathfrak{m}_{C}^{n}$ with an isomorphism $C/\mathfrak{m}_{C}^{n}\longrightarrow A$.
Likewise there is a unique ring homomorphism
$\psi:C\longrightarrow B$
which induces the inclusion map on the residue fields and respects $s_{C}$ and $s_{B}$.
Again $\psi$ is the composition of the quotient of $C\longrightarrow C/\mathfrak{m}_{C}^{m}$ with an embedding $C/\mathfrak{m}_{C}^{m}\longrightarrow B$.
Since $n\geq m$, $\mathfrak{m}_{C}^{n}\subseteq\mathfrak{m}_{C}^{m}$, and thus these homomorphisms give rise to a ring homomorphism $A\longrightarrow B$ that induces the inclusion map on the residue fields and respects $s_{A}$ and $s_{B}$.
Once again, the uniqueness follows from Theorem~\ref{thm:MIT}.
Note that the morphism is an embedding if and only if $m=n$.
\end{proof}

\begin{remark}\label{rem:quotient}
In the setting of Theorem~\ref{thm:CoHo}, and in the case that $(A,k)$ is strict and $(B,l)$ is of characteristic $p^{m}$,
the resulting morphism $\varphi$ factors into a composition of the natural quotient map
$(A,k)\longrightarrow(A/\mathfrak{m}_{A}^{m},k)$
and an embedding
$(A/\mathfrak{m}_{A}^{m},k)\longrightarrow(B,l)$.
\end{remark}

In our applications of Cohen's Homomorphism Theorem in the second part of the paper, 
we require the following consequence:

\begin{corollary}[Relative Embedding Theorem]\label{cor:CoHo_relative}
Let $(A_{1},k_{1})$ and $(A_{2},k_{2})$ be two Cohen rings,
and let $(A_{0},k_{0})$ be a Cohen subring common to both.
Assume we are given an embedding of residue fields $\varphi_k:k_1 \longrightarrow k_2$ over $k_0$ and that both $k_1/k_0$ and $k_2/\varphi_k(k_1)$ are separable. 
Then, there is an embedding $\varphi$ of $(A_{1},k_{1})$ into $(A_{2},k_{2})$ which induces
$\varphi_k$ and fixes
$A_{0}$ pointwise.
Moreover, if $\varphi_{k}$ is an isomorphism then $\varphi$ is an isomorphism.
\end{corollary}
\begin{proof}
Note that by assumption, $(A_1,k_1), (A_2,k_2)$ and $(A_0,k_0)$ have the same
characteristic.
Let $\bfbeta_0$ be a $p$-basis of $k_0$, and $s_0:\bfbeta_0 \longrightarrow A_0$ be a choice of representatives.
We first show the existence of an embedding of Cohen rings 
$\varphi:A_1 \longrightarrow A_2$ which induces $\varphi_k$ and fixes $s_0(\bfbeta_{0})$.
Since $k_1/k_0$ is separable, we can find a $p$-basis $\bfbeta_1$ of $k_{1}$ prolonging $\bfbeta_0$,
and a choice of representatives $s_1:\bfbeta_1 \longrightarrow A_1$ prolonging $s_0$.
Note that since $\varphi_k$ restricts to the identity on $k_0$,
$\varphi_{k}(\bfbeta_1)$ also contains $\bfbeta_0$.
We now choose representatives
$s_2:\varphi_{k}(\bfbeta_1) \longrightarrow A_2$ such that $s_2$ prolongs $s_0$.
By Theorem \ref{thm:CoHo},
and since $k_{2}/\varphi_{k}(k_{1})$ is separable,
there is a morphism $\varphi:A_1 \longrightarrow A_2$
that respects $s_1$ and $s_2$ (and hence fixes $s_0(\bfbeta_{0})$) and induces
$\varphi_k$.
Since $A_{1}$ and $A_{2}$ have the same characteristic, $\varphi$ is an embedding.

Now, let $\varphi:A_1\longrightarrow A_2$ be any embedding which induces $\varphi_k$ and
fixes $s_0(\bfbeta_{0})$. Then the restriction $\varphi_0$ of $\varphi$ to $A_0$ is a ring
isomorphism between $A_0$ and $\varphi_0(A_0)$. 
Since $s_0(\bfbeta_0)$ is contained in $\varphi_0(A_0)$, by Mac Lane's Identity Theorem
(Theorem~\ref{thm:MIT}) we get $A_0 \subseteq \varphi_0(A_0)$.
Symmetrically, as
$s_0$ is also a choice of representatives for a $p$-basis of the residue field of $\varphi_0(A_0)$, we get $\varphi_0(A_0) \subseteq A_0$.
Therefore $\varphi_{0}$ is an automorphism of $A_{0}$.
By assumption, $\varphi_k$ restricts to the identity on $k_0$, and so $\varphi$ induces the identity on the residue field of $A_{0}$.
By construction of $\varphi$, $\varphi_{0}$ fixes $s_0(\bfbeta_{0})$.
Hence in particular $\varphi_{0}$ respects $s_{0}$ and $s_{0}$
(note that $s_{0}$ is a choice of representatives of the domain and codomain of $\varphi_{0}$, which are both $A_{0}$).
Theorem~\ref{thm:CoHo} implies that there is a unique automorphism of $A_{0}$ with these properties.
As the identity map from $A_0$ to $A_0$ also induces
the identity on $k_0$ and fixes $s_0(\bfbeta_{0})$, we conclude $\varphi_0 = \mathrm{id}_{A_0}$.
 
Finally, we show that if $\varphi_k$ is an isomorphism, then $\varphi$ is an isomorphism: if $\varphi_k$ is an isomorphism and $\bfbeta_{1}$ is a $p$-basis of $k_1$, then $\varphi_k(\bfbeta_{1})$ is 
a $p$-basis of $k_2$. Thus, $\varphi(A_1)$ contains the lift of a $p$-basis for $k_2$, and hence we have
$A_2 \subseteq \varphi(A_1)$ by Mac Lane's Identity Theorem
(Theorem~\ref{thm:MIT}).
\end{proof}

More explicitly, given an isomorphism between the residue fields of two Cohen rings of the same characteristic, we get a complete understanding of its lifts to isomorphisms of Cohen rings:

\begin{corollary}[{Cohen Structure Theorem, v.1}]\label{cor:Cohen_structure_1}
Let $(A_{1},k_{1})$ and $(A_{2},k_{2})$ be two Cohen rings of the same characteristic,
let $\varphi_{k}:k_{1}\longrightarrow k_{2}$ be an isomorphism of residue fields,
and
let $\bfbeta\subseteq k_{1}$ be a $p$-basis.
Consider representatives
$s_{1}:\bfbeta\longrightarrow A_{1}$
and
$s_{2}:\varphi_{k}(\bfbeta)\longrightarrow A_{2}$.
There exists a unique isomorphism of Cohen rings
$$
    \varphi=(\varphi_{A},\varphi_{k}):(A_{1},k_{1})\longrightarrow(A_{2},k_{2}),
$$
which respects $s_{1}$ and $s_{2}$,
and which is $\varphi_{k}$ on the residue fields.
\end{corollary}
\begin{proof}
If both $(A_{1},k_1)$ and $(A_{2},k_2)$ are strict then both existence and uniqueness follow from
Theorem \ref{thm:CoHo}.
Suppose next that both $(A_{1},k_1)$ and $(A_{2},k_2)$ are of characteristic $p^{m}$.
Let $(B,k_1)$ be a strict Cohen ring with representatives $s:\bfbeta\longrightarrow B$.
By Theorem \ref{thm:CoHo} there are unique morphisms
$\varphi^{1}=(\varphi^{1}_{A},\mathrm{id}_{k_1}):(B,k_1)\longrightarrow(A_{1},k_1)$
and $\varphi^{2}=(\varphi^{2}_{A},\varphi_{k}):(B,k_1)\longrightarrow(A_{2},k_2)$
which respect $s$ and $s_1$ (resp.~$s_{2}$) and which induce the identity (resp.~$\varphi_k)$ on the residue field.
Moreover, both $\varphi^{i}_{A}$ are surjective and both factor through the quotient map $B\longrightarrow B/\mathfrak{m}^{m}$ (cf.~Remark~\ref{rem:quotient}).
Thus, by the Isomorphism Theorem, both $(A_{i},k_i)$ are isomorphic to $(B/\mathfrak{m}^{m},k_1)$.
Therefore there is an isomorphism between them that respects $s_{1}$ and $s_{2}$, and induces $\varphi_{k}$ on the residue field.
This isomorphism is unique, by Theorem~\ref{thm:CoHo}.
\end{proof}

As long as one is only interested in the existence of an isomorphism of Cohen rings,
the following simplified version of the above is sufficient:

\begin{corollary}[{Cohen Structure Theorem, v.2}]\label{cor:Cohen_structure_2}
Let $(A_{1},k_1)$ and $(A_{2},k_2)$ be Cohen rings of the same characteristic, and 
assume that $\varphi_{k}:k_1 \longrightarrow k_2$ is an isomorphism of the residue fields.
There exists an isomorphism of Cohen rings
$$
    \varphi=(\varphi_{A},\varphi_k):(A_{1},k_1)\longrightarrow(A_{2},k_2),
$$
which is $\varphi_k$ on the residue fields.
\end{corollary}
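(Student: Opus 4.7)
The plan is to deduce Corollary \ref{cor:Cohen_structure_2} immediately from Corollary \ref{cor:Cohen_structure_1} by observing that the only ingredient missing in the hypothesis is a choice of representatives on each side, and such choices can always be made.

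First I would fix a $p$-basis $\bfbeta$ of $k$; such a basis exists for any field of characteristic $p$. Next, for each $i \in \{1,2\}$, I would choose an arbitrary representative map $s_{i} : \bfbeta \longrightarrow A_{i}$. This is possible because the residue map $\res : A_{i} \longrightarrow k$ is surjective, so for each $\beta \in \bfbeta$ we may simply pick any preimage in $A_{i}$. No compatibility between $s_{1}$ and $s_{2}$ is required.

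With $\bfbeta$, $s_{1}$, and $s_{2}$ in hand, the hypotheses of Corollary \ref{cor:Cohen_structure_1} are satisfied: the two Cohen rings $(A_{1},k)$ and $(A_{2},k)$ have the same characteristic and the same residue field, and we have representatives on each side of a common $p$-basis of $k$. Corollary \ref{cor:Cohen_structure_1} then provides an isomorphism of Cohen rings
\[
\phi = (\phi_{A}, \mathrm{id}_{k}) : (A_{1},k) \longrightarrow (A_{2},k),
\]
which, in particular, is the identity on the residue fields, as required.

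There is no substantive obstacle here; the content of the corollary is already packaged inside the stronger uniqueness statement of Corollary \ref{cor:Cohen_structure_1}. The only thing to notice is that while Corollary \ref{cor:Cohen_structure_1} produces a \emph{unique} isomorphism respecting the given representatives, the present corollary asserts only existence and so makes no claim of uniqueness once the auxiliary choices are forgotten.
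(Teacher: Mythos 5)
Your proof is correct and takes exactly the route the paper does: the paper simply notes the corollary is immediate from Corollary \ref{cor:Cohen_structure_1}, and you have spelled out the routine step of choosing a $p$-basis $\bfbeta$ of $k$ and arbitrary representatives $s_i:\bfbeta\to A_i$. Your closing remark that uniqueness is lost once the auxiliary choices are forgotten is an accurate observation.
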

\begin{proof}
Immediate from Corollary \ref{cor:Cohen_structure_1}.
\end{proof}

Our aim is now to apply Cohen's Homomorphism Theorem to give a clear statement of the relative structure of Cohen rings.
That is, we will describe the morphisms between Cohen rings 
which extend a given morphism between subrings. Although we will not refer to the statement later on in this paper, we state and prove it for future reference.
It should be noted once again that this is closely based on the work of Teichm\"{u}ller, Mac Lane, Cohen, and others.
See for example \cite{Tei36b}, \cite{Mac39c}, and \cite{Coh46}.

\begin{theorem}[Relative Homomorphism Theorem]
\label{thm:relative_homomorphism_theorem}
Let $(A_{1},k_{1})\subseteq(A_{2},k_{2})$
and
$(B_{1},l_{1})\subseteq(B_{2},l_{2})$
be two extensions of Cohen rings,
let
\begin{align*}
    \varphi=(\varphi_{A},\varphi_{k}):(A_{1},k_{1})\longrightarrow(B_{1},l_{1})
\end{align*}
be a morphism,
and let $\rmPhi_{k}:k_{2}\longrightarrow l_{2}$ be an embedding of fields which extends $\varphi_{k}$.
Suppose that both
$l_{2}/\rmPhi_{k}(k_{2})$
and
$k_{2}/k_{1}$
are separable.
Let
$\bfbeta$
be a $p$-basis of $k_{2}$ over $k_{1}$, 
and let $s_{A}:\bfbeta\longrightarrow A_{2}$
and $s_{B}:\rmPhi_{k}(\bfbeta)\longrightarrow B_{2}$
be choices of representatives.

Then, there exists a 
unique morphism of Cohen rings
$$
    \rmPhi:=(\rmPhi_{A},\rmPhi_{k}):(A_{2},k_{2})\longrightarrow(B_{2},l_{2}),
$$
that respects $s_{A}$ and $s_{B}$,
that induces $\rmPhi_{k}$ on the residue fields,
and that extends $\varphi$.
\end{theorem}

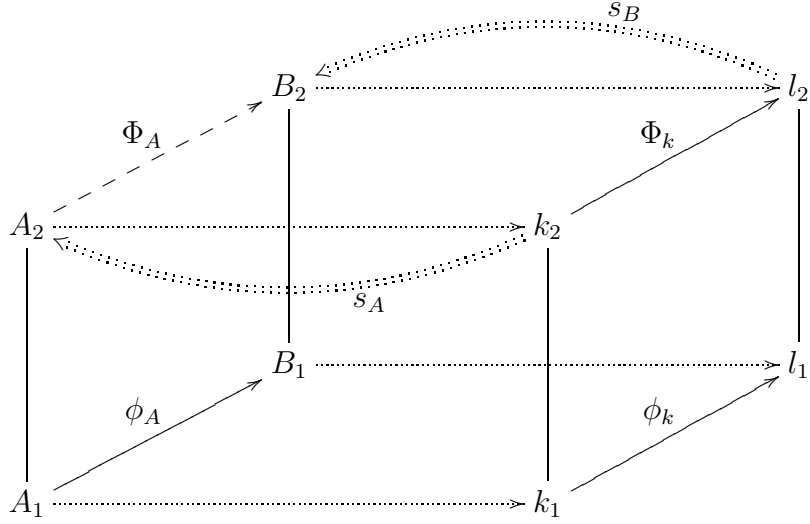
\begin{figure}[ht]
$$
\renewcommand{\labelstyle}{\textstyle}
    \xymatrix@=3em{
        &&
        B_{2}\ar@{.>}[rrrr]
        &&&&
        l_{2}
        \ar@{:>}@/_2pc/[llll]_(0.35){s_{B}}
        \\
        A_{2}\ar@{.>}[rrrr]
        \ar@{-->}[rru]_{}^{\rmPhi_{A}}
        &&&&
        k_{2}\ar@{->}[rru]_{}^{\rmPhi_{k}}
        \ar@{:>}@/^2pc/[llll]^(0.35){s_{A}}
        &&
        \\
        &&
        B_{1}\ar@{-}[uu]\ar@{.>}[rrrr]
        &&&&
        l_{1}\ar@{-}[uu]
        \\
        A_{1}\ar@{-}[uu]\ar@{->}[rru]^{\varphi_{A}}\ar@{.>}[rrrr]
        &&
        &&
        k_{1}\ar@{-}[uu]\ar@{->}[rru]^{\varphi_{k}}
        &&
    }
$$
\caption{Illustration of Theorem~\ref{thm:relative_homomorphism_theorem}}
\label{fig:4}
\end{figure}

\begin{proof}
We are given a $p$-basis $\bfbeta$ of $k_{2}$ over $k_{1}$,
that is each $\beta\in\bfbeta$ is of degree $p$ over $k_{2}^{(p)}k_{1}(\bfbeta\setminus\{\beta\})$,
and $k_{2}=k_{2}^{(p)}k_{1}(\bfbeta)$.
Choose any $p$-basis $\bfbeta_{A,1}$ of $k_{1}$ and any representatives
$s_{A,1}:\bfbeta_{A,1}\longrightarrow A_{1}$.
Since $k_{2}/k_{1}$ is separable,
$\bfbeta_{A,2}:=\bfbeta\sqcup\bfbeta_{A,1}$
is a $p$-basis of $k_{2}$.
We define
\begin{align*}
    s_{A,2}:\bfbeta_{A,2}&\longrightarrow A_{2}\\
    \beta&\longmapsto\left\{
    \begin{array}{ll}
    s_{A,1}(\beta)&\beta\in\bfbeta_{A,1}\\
    s_{A}(\beta)&\beta\in\bfbeta,
    \end{array}\right.
\end{align*}
which is a choice of representatives for $\bfbeta_{A,2}$.
Next we let $\bfbeta_{B,2}:=\varphi_{k}(\bfbeta_{A,1})\sqcup\rmPhi_{k}(\bfbeta)$.
We define
\begin{align*}
    s_{B,2}:\bfbeta_{B,2}&\longrightarrow B_{2}\\
    \beta&\longmapsto\left\{
    \begin{array}{ll}
    \varphi_{A}(s_{A,1}(\varphi_{k}^{-1}(\beta)))&\beta\in\varphi_{k}(\bfbeta_{A,1})\\
    s_{B}(\beta)&\beta\in\rmPhi_{k}(\bfbeta),
    \end{array}\right.
\end{align*}
which is a choice of representatives for $\bfbeta_{B,2}$.
It follows from Theorem \ref{thm:CoHo} that there is a unique morphism
$$
    \rmPhi=(\rmPhi_{A},\rmPhi_{k}):(A_{2},k_{2})\longrightarrow(B_{2},k_{2}),
$$
which respects $s_{A,2}$ and $s_{B,2}$, and which is $\rmPhi_{k}$ on the residue fields.
Observe that $\rmPhi$ extends $\varphi$ since in particular $\rmPhi$ respects $s_{A,1}$ and $\varphi_{A}\circ s_{A,1}\circ\varphi_{k}^{-1}|_{\varphi_{k}(\bfbeta_{A,1})}$ (the latter being a choice of representatives for $\varphi_{k}(\bfbeta_{A,1})$).
This proves the existence part of our claim.
For uniqueness, if $\rmPsi$ is any other morphism which extends $\varphi$ and respects $s_{A}$ and $s_{B}$ then we may argue that it also respects $s_{A,2}$ and $s_{B,2}$, just as for $\rmPhi$.
Therefore $\rmPhi=\rmPsi$ by Theorem \ref{thm:CoHo}.
\end{proof}

\section{Cohen--Witt rings}

Let $k$ denote a field of characteristic $p>0$.
For each natural number $n\in\mathbb{N}$, we denote the {\bf $n$-th Witt ring} over $k$ by 
$W_{n+1}(k)$,
and the {\bf infinite Witt ring} 
we denote by $W[k]$,
as described, for example, in \cite{vdD14} and in many other places.

If $k$ is perfect, then $W[k]$ is a complete discrete valuation ring of characteristic zero with residue field $k$.
That is, $(W[k],k)$ is a strict Cohen ring.
By Theorem \ref{thm:CoHo}, $(W[k],k)$ may be viewed as providing the canonical example of a Cohen ring with residue field $k$, canonical in the sense that for perfect $k$ there is a canonical isomorphism between any two strict Cohen rings with residue field $k$.
Likewise, $(W_{n}(k),k)$ is the canonical example of a Cohen ring with residue field $k$, of characteristic $p^{n}$.

On the other hand,
if $k$ is imperfect, then $W[k]$ fails to be a valuation ring.
There is a less well-known construction,
appropriate for the case of imperfect residue fields, which constructs Cohen rings as subrings of Witt rings
(see e.g.~\cite{Sch72}).
To mitigate the conflict with our own terminology,
we will refer to these more concrete rings as
`Cohen--Witt rings'.
We fix a $p$-basis $\bfbeta$ of $k$.
For each $n\in\mathbb{N}$,
the {\bf $n$-th Cohen--Witt ring} over $k$,
which we denote by $C_{n+1}(k)$,
is the subring of $W_{n+1}(k)$ generated by $W_{n+1}(k^{(p^{n})})$ and the elements $[\beta]=(\beta,0,...)$, for $\beta\in\bfbeta$.
That is
\begin{align*}
C_{n+1}(k)&:=W_{n+1}(k^{(p^{n})})\big([\beta]\mid\beta\in\bfbeta\big).
\end{align*}
We note that $C_{n+1}(k)$ is a local ring, with maximal ideal $(p)$ and residue field $k$.
Thus $(C_{n+1}(k),k)$ is indeed a Cohen ring.
There are representatives
$s_{n}:\bfbeta\longrightarrow C_{n+1}(k)$,
given by $s_{n}(\beta)=[\beta]$, for $\beta\in\bfbeta$.
The maps $\pi_{n}:W_{n+1}(k)\longrightarrow W_{n}(k)$, which are given by the truncation of the Witt vectors, restrict to surjections
\begin{align*}
\pi_{n}|_{C_{n+1}(k)}:C_{n+1}(k)&\longrightarrow C_{n}(k).
\end{align*}
Just as with the Witt rings, the Cohen--Witt rings equipped with these maps form an inverse system, as in Remark~\ref{rem:inverse_system}, the inverse limit of which is the {\bf strict Cohen--Witt ring} over $k$:
\begin{align*}
C[k]&:=\lim_{\longleftarrow}C_{n+1}(k).
\end{align*}
The field of fractions of $C[k]$ is often denoted by $C(k)$.
This system may be enriched with a compatible system of representatives
$s_{n}:\bfbeta\longrightarrow C_{n+1}(k)$,
as in Remark~\ref{rem:inverse_system_2}.
It is a consequence of
Corollary \ref{cor:Cohen_structure_2}
that any strict Cohen ring $(A,k)$ is isomorphic to the strict Cohen--Witt ring $C[k]$, though the isomorphism is not canonical in the sense that it depends on our choices of $\bfbeta$ and $s$.

\part{The Model Theory}

\section{Theories and Completeness}

Having developed the algebraic theory of Cohen rings, \label{part:2}
we are now in a position to describe their first-order theories.
Let $\mathfrak{L}_{\mathrm{ring}}=\{+,-,\cdot,0,1\}$ denote the first-order language of rings,
and
$\mathfrak{L}_{\mathrm{vf}} = \mathfrak{L}_\mathrm{ring}\cup \{\mathcal{O}\}$
the expansion of $\mathfrak{L}_\mathrm{ring}$ by a unary predicate (usually interpreted as the valuation ring).
To study the structure of the value group, we also consider the language of ordered abelian groups
$\mathfrak{L}_{\mathrm{oag}}=\{0,+,\leq\}$.

We consider the following theories:

\begin{definition}
Let $k$ be a field of characteristic $p$.
\begin{itemize}
\item 
Let $T_\mathrm{pc}$ be the $\mathfrak{L}_{\mathrm{ring}}$-theory of commutative rings with unity in which $(p)$ is the unique maximal ideal.
\item
Let $T_\mathrm{pc}(k,n)$ be the $\mathfrak{L}_{\mathrm{ring}}$-theory consisting of the union of $T_\mathrm{pc}$ with axioms that assert of a model $B$ that its characteristic is $p^{n}$ and that its residue field $k_{B}$ is a elementarily equivalent to $k$.
\item Let $T_\mathrm{ur}$ be the $\mathfrak{L}_{\mathrm{vf}}$-theory that asserts of a model $(K,v)$ that $v$ is henselian, $\mathcal{O}_{v}$ is a valuation ring on $K$ which is a model of $T_\mathrm{pc}$, and that the characteristic of $K$ is zero.
\item 
Let $T_\mathrm{ur}(k,\rmGamma)$ be the $\mathfrak{L}_{\mathrm{vf}}$-theory extending $T_\mathrm{ur}$ which requires in addition for a model $(K,v)$ that $Kv$ is elementarily equivalent to $k$ and that $vK$ is elementarily equivalent to $\rmGamma$.
\end{itemize}
\end{definition}

Note that
if $\rmGamma$ is an ordered abelian group \emph{without minimum positive element}
then $T_\mathrm{ur}(k,\rmGamma)$ is not consistent.
We do not define $T_{\mathrm{ur}}(k,\rmGamma)$ for a field $k$ of characteristic $0$.

The aim of this section is to show that $T_\mathrm{pc}(k,n)$ and $T_\mathrm{ur}(k,\rmGamma)$ are complete and to deduce the usual AKE type consequences from this. The non-strict case is a simple application
of the Structure Theorem for Cohen rings of positive characteristic:
\begin{theorem}\label{thm:completeness_nonstrict}
For any field $k$ of positive characteristic, the $\mathfrak{L}_\mathrm{ring}$-theory
$T_\mathrm{pc}(k,n)$
is complete.
\end{theorem}
\begin{proof}
Let $B_{1},B_{2}\models T_\mathrm{pc}(k,n)$.
By the Keisler--Shelah Theorem (\cite{She71}), replacing $B_{1}$ and $B_{2}$ with suitable ultrapowers if necessary,
we may assume that
there is an $\mathfrak{L}_{\mathrm{ring}}$-isomorphism
$\varphi_{k}:k_{B_{1}}\longrightarrow k_{B_{2}}$.
Applying the Structure Theorem for Cohen rings
(Corollary \ref{cor:Cohen_structure_2}), we get an isomorphism 
$\varphi:B_{1}\longrightarrow B_{2}$
that induces $\varphi_{k}$.
In particular this implies that $B_{1}$ and $B_{2}$ are elementarily equivalent.
\end{proof}

For the general case, we use the usual proof method of combining the Structure Theorem with a coarsening argument which allows us to apply the Ax--Kochen/Ershov Theorem in the equicharacteristic $0$ case:
\begin{theorem}\label{thm:completeness}
For any field $k$ of positive characteristic and any ordered abelian group $\rmGamma$ with minimum positive element, the $\mathfrak{L}_\mathrm{vf}$-theory
$T_\mathrm{ur}(k,\rmGamma)$
is complete.
\end{theorem}
\begin{proof}
Let $(K_{1},v_{1}),(K_{2},v_{2})\models T_{\mathrm{ur}}(k,\rmGamma)$.
By the Keisler--Shelah Theorem (\cite{She71}), replacing each structure with a suitable ultrapower if necessary,
we may assume that
both $(K_{1},v_{1})$ and $(K_{2},v_{2})$ are $\aleph_{1}$-saturated,
and that there is an $\mathfrak{L}_\mathrm{ring}$-isomorphism
$\varphi_{k}:K_{1}v_{1}\longrightarrow K_{2}v_{2}$
and an $\mathfrak{L}_\mathrm{oag}$-isomorphism
$\varphi_{\rmGamma}:v_{1}K_{1}\longrightarrow v_{2}K_{2}$.
For $i=1,2$,
let $w_{i}$ denote the finest proper coarsening of $v_{i}$ (note that $w_i$ exists because
$v_iK_i$ has a minimum positive element)
and let $\bar{v}_{i}$
denote the valuation induced by $v_{i}$ on the residue field $K_{i}w_{i}$.

By $\aleph_{1}$-saturation,
both
valued fields $(K_{i}w_{i}, \bar{v_i})$ are spherically complete and hence the valuation rings 
$\mathcal{O}_{\bar{v}_{i}}$
are strict Cohen rings.
By the Structure Theorem for Cohen rings (Corollary \ref{cor:Cohen_structure_2}),
there exists an isomorphism
$\varphi:\mathcal{O}_{\bar{v}_{1}}\longrightarrow\mathcal{O}_{\bar{v}_{2}}$
that induces $\varphi_{k}$. 
Note that $\varphi_{\rmGamma}$ also induces an isomorphism
$\bar{\varphi}_{\rmGamma}:w_{1}K_{1}\longrightarrow w_{2}K_{2}$
because any isomorphism of ordered abelian groups sends a minimum positive element (and hence the generated convex subgroup) to another such.
Since both
$(K_{i},w_{i})$
are henselian of equicharacteristic zero,
the Ax--Kochen/Ershov principle (\cite[AKE-Theorem 5.1]{vdD14}) implies that
$(K_{1},w_{1})$ and $(K_{2},w_{2})$ are elementarily equivalent.

It follows by the $\emptyset$-definability of the valuation $v_{i}$ in $K_{i}$ (this is a variant of Robinson's classical definition of $\mathbb{Z}_p$ in $\mathbb{Q}_p$, see, e.g., 
\cite[Corollary 2]{Hon}) that
$(K_{1},v_{1})$ and $(K_{2},v_{2})$ are elementarily equivalent, as required.
\end{proof}

\begin{remark}
In fact, B\'elair proves relative quantifier elimination for the valued field sort of an unramified henselian valued field in the $\omega$-sorted language $\mathcal{L}_{\mathrm{co}_{\omega}}$ down to the sorts
for the residue rings $\mathcal{O}/\mathfrak{m}^n$ (cf.~\cite[Th\'eor\`{e}me 5.1]{Bel99}). Applying this quantifier
elimination,
Theorem \ref{thm:completeness} can also be deduced from Theorem \ref{thm:completeness_nonstrict}.
\end{remark}

In particular, Theorem \ref{thm:completeness} immediately implies the following Ax-Kochen/Ershov-type result, sometimes referred to as an AKE$_\equiv$-principle:
\begin{corollary}[Ax--Kochen/Ershov principle for unramified henselian valued fields] \label{cor:AKE}
Let $(K,v)$ and $(L,w)$ be two unramified henselian valued fields.
Then
$$
\underbrace{Kv \equiv Lw}_{\textrm{in }\mathfrak{L}_\mathrm{ring}}  \textrm{ and }  \underbrace{vK \equiv wL}_{\textrm{in }\mathfrak{L}_\mathrm{oag}} \,
\Longleftrightarrow \, 
\underbrace{(K,v)\equiv (L,w)}_{\textrm{in }\mathfrak{L}_\mathrm{vf}}.
$$
\end{corollary}

\begin{remark}
Corollary \ref{cor:AKE} above
is essentially claimed by B\'elair in 
\cite[Corollaire 5.2]{Bel99}.
However, B\'{e}lair's proof only goes through in the case of a perfect residue field, since it uses the rings of Witt vectors.
\end{remark}

The Ax--Kochen/Ershov Principle, above, immediately gives an axiomatisation of the complete theories of unramified henselian valued fields, as follows.

\begin{corollary}\label{cor:axiomatisation}
Let $(K,v)$ be an unramified henselian valued field of mixed characteristic.
The complete $\mathfrak{L}_{\mathrm{vf}}$-theory of $(K,v)$ is axiomatised by
\begin{enumerate}
\item $(K,v)$ is an henselian valued field of mixed characteristic $(0,p)$,
\item the value group is elementarily equivalent to $vK$ and $v(p)$ is minimum positive, and
\item the residue field is elementarily equivalent to $Kv$.
\end{enumerate}
\end{corollary}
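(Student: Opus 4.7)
The plan is to observe that this is a direct consequence of the Ax--Kochen/Ershov Principle (Corollary \ref{cor:AKE}), together with the fact that the listed axioms are expressible in the language $\mathcal{L}_{\val}$ (possibly via an infinite axiom scheme, but that is harmless for axiomatisability).

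First I would let $T$ denote the $\mathcal{L}_{\val}$-theory generated by the three axiom schemes \I--\III, and verify that $(K,v)\models T$. The content of \I~(henselianity plus mixed characteristic $(0,p)$) is well known to be $\mathcal{L}_{\val}$-axiomatisable. For \II, note that \textbf{unramifiedness} is captured by the single $\mathcal{L}_{\val}$-sentence asserting that $v(p)$ is the minimum positive element of the value group; and elementary equivalence of value groups to $vK$ is expressible by the scheme of all $\mathcal{L}_{\mathrm{oag}}$-sentences true in $vK$, translated into $\mathcal{L}_{\val}$ via the standard interpretation of the value group in the valued field. Similarly, \III~is expressible via the $\mathcal{L}_{\mathrm{ring}}$-theory of $Kv$ interpreted in the residue field.

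Next I would show $T$ is complete. Take any two models $(K_{1},v_{1}),(K_{2},v_{2})\models T$. By construction, both are henselian valued fields of mixed characteristic $(0,p)$ with $v_{i}(p)$ minimum positive, so both are unramified henselian; moreover $\rmGamma_{v_{1}}\equiv\rmGamma_{v_{2}}\equiv vK$ and $k_{1}\equiv k_{2}\equiv Kv$. By Corollary \ref{cor:AKE}, the equivalence $\III\Rightarrow\II$ there gives that $(K_{1},v_{1})$ and $(K_{2},v_{2})$ are $\mathcal{L}_{\val}$-elementarily equivalent. Thus $T$ is complete, and since $(K,v)\models T$, the theory $T$ coincides with the complete $\mathcal{L}_{\val}$-theory of $(K,v)$.

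There is no real obstacle here, as the substantive work is done by Corollary \ref{cor:AKE}. The only minor point worth being careful about is that the axioms in \II~and \III~refer to elementary equivalence, hence genuinely constitute (infinite) schemes of first-order sentences rather than a single axiom; but this is standard and adds nothing beyond bookkeeping.
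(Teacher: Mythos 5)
Your argument is correct and matches the paper's intent exactly: the paper states that Corollary \ref{cor:axiomatisation} follows immediately from the Ax--Kochen/Ershov Principle (Corollary \ref{cor:AKE}), and your write-up simply spells out the routine bookkeeping (that conditions \I--\III~form an $\mathcal{L}_{\val}$-axiom scheme, that $(K,v)$ is a model, and that any two models are elementarily equivalent by \III$\Rightarrow$\II~of Corollary \ref{cor:AKE}). No gaps.
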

In particular, we get an axiomatization of the $\mathfrak{L}_\mathrm{vf}$-theory of $(C(k),v)$, relative to the $\mathfrak{L}_\mathrm{ring}$-theory of the residue field $k$.

\section{Relative model completeness}

In analogy to the case of unramified henselian fields with perfect residue field (cf.~\cite[Theorem 7.2]{vdD14}), we also get
an AKE$_\preceq$-principle for the case of arbitrary residue fields.
To prove relative completeness, or in other words an AKE$_\equiv$-principle for unramified henselian
valued fields, it is sufficient to know that Cohen rings are unique up to isomorphism.
However, in order to prove the AKE$_\preceq$-principle for unramified henselian valued fields, we need to apply the
Relative Structure Theorem
(Corollary~\ref{cor:CoHo_relative}).
We first state the non-strict version:

\begin{proposition}[Relative model completeness, non-strict version]\label{prp:MC_nonstrict}
Given $A, B \models T_\mathrm{pc}(k,n)$ with $A \subseteq B$ such that the induced embedding of residue fields
$k_A \subseteq k_B$ is an elementary embedding (in $\mathfrak{L}_\mathrm{ring}$), we have
$$A \preceq B.$$
\end{proposition}
\begin{proof}
Let $B_{1},B_{2},A\models T_{\mathrm{pc}}(k,n)$ with $A\subseteq B_{1},B_{2}$ two extensions. 
We suppose that the induced extensions
$k_{A}\preceq k_{B_{i}}$, for $i=1,2$,
are elementary.
We claim that $B_1$ and $B_2$ are elementarily equivalent over $A$,
symbolically $B_{1}\equiv_{A}B_{2}$.

By the Keisler--Shelah Theorem (\cite{She71}),
replacing $B_{1}$ and $B_{2}$ with suitable ultrapowers if necessary,
we may assume that there is an $\mathfrak{L}_{\mathrm{ring}}$-isomorphism
$\varphi_{k}:k_{B_{1}}\longrightarrow k_{B_{2}}$
that fixes $k_{A}$ pointwise.
By Corollary~\ref{cor:CoHo_relative},
there is an isomorphism
$\varphi:(B_{1},k_{1})\longrightarrow(B_{2},k_{2})$
that induces $\varphi_{k}$ and fixes $A$ pointwise.
In particular, $B_{1}$ and $B_{2}$ are elementarily equivalent over $A$.
This proves the claim.

We return to the setting of an extension $A\subseteq B$ of models of $T_{\mathrm{pc}}(k,n)$ for which $k_{A}\preceq k_{B}$ is elementary.
From the claim it follows that $A\equiv_{A}B$, equivalently the extension $A\subseteq B$ is elementary.
\end{proof}

For the strict version of the relative model completeness theorem, we combine the Relative Structure Theorem (Corollary \ref{cor:CoHo_relative}) with the coarsening method and 
well-known results from the equicharacteristic zero world.

\begin{theorem}[Relative model completeness]\label{thm:MC_strict}
Given an extension $(K,v) \subseteq (L,w)$ of unramified henselian valued fields
such that the induced embeddings of residue fields $Kv \subseteq Lw$ and value groups $vK \subseteq wL$ are elementary (in $\mathfrak{L}_\mathrm{ring}$ and $\mathfrak{L}_\mathrm{oag}$ respectively), we have $$(K,v)\preceq (L,w).$$
\end{theorem}
\begin{proof}
Let $(K_{i},v_{i})\models T_{\mathrm{ur}}(k,\rmGamma)$, for $i=0,1,2$,
be such that
$(K_{0},v_{0})\subseteq(K_{1},v_{1}),(K_{2},v_{2})$
are two extensions of valued fields.
We suppose that the residue field extensions
$K_{0}v_{0}\preceq K_{i}v_{i}$,
and the value group extensions
$v_{0}K_{0}\preceq v_{i}K_{i}$,
both for $i=1,2$,
are elementary.
We claim that $(K_{1},v_{1})$ and $(K_{2},v_{2})$ are elementarily equivalent over $(K_{0},v_{0})$,
symbolically $(K_{1},v_{1})\equiv_{(K_{0},v_{0})}(K_{2},v_{2})$.

By the Keisler--Shelah Theorem (\cite{She71}),
replacing each valued field with a suitable ultrapower if necessary,
we may assume that all three
valued fields
are $\aleph_{1}$-saturated
and that there is an isomorphism
$\varphi_{k}:K_{1}v_{1}\longrightarrow K_{2}v_{2}$
that fixes $K_{0}v_{0}$ pointwise,
and an isomorphism
$\varphi_{\rmGamma}:v_{1}K_{1}\longrightarrow v_{2}K_{2}$
that fixes $v_{0}K_{0}$ pointwise.
For $i=0,1,2$,
let $\hat{v}_{i}$ be the finest proper coarsening of $v_{i}$,
and let $\bar{v}_{i}$ be the valuation induced on $K_{i}\hat{v}_{i}$ by $v_{i}$.
By $\aleph_{1}$-saturation,
all three
$(K_{i}\hat{v}_{i},\bar{v}_{i})$
are strict Cohen rings.
Note also that these coarsenings are compatible in the sense that,
for both $i=1,2$,
we have extensions
$(K_{0},\hat{v}_{0})\subseteq(K_{i},\hat{v}_{i})$
and
$(K_{0}\hat{v}_{0},\bar{v}_{0})\subseteq(K_{i}\hat{v}_{i},\bar{v}_{i})$.
Moreover $\varphi_{\rmGamma}$ induces an isomorphism
$\hat{\varphi}_{\rmGamma}:\hat{v}_{1}K_{1}\longrightarrow\hat{v}_{2}K_{2}$
that fixes $\hat{v}_{0}K_{0}$ pointwise,
since
$\varphi_{\rmGamma}$ restricts to an isomorphism between the convex subgroups
$\langle v_{1}(p)\rangle$
and 
$\langle v_{2}(p)\rangle$,
and since 
$\hat{v}_{i}K_{i}$ is the quotient of $v_{i}K_{i}$ by
$\langle v_{i}(p)\rangle$.

By Corollary~\ref{cor:CoHo_relative},
there is an isomorphism
$\varphi:(K_{1}\hat{v}_{1},\bar{v}_{1})\longrightarrow(K_{2}\hat{v}_{2},\bar{v}_{2})$
that induces $\varphi_{k}$ and fixes $K_{0}\hat{v}_{0}$ pointwise.
In particular $K_{1}\hat{v}_{1}$ and $K_{2}\hat{v}_{2}$ are elementarily equivalent over $K_{0}\hat{v}_{0}$.

Therefore $(K_{1},\hat{v}_{1})$ and $(K_{2},\hat{v}_{2})$ are two henselian valued fields of equicharacteristic zero, both extending $(K_{0},\hat{v}_{0})$,
with value groups isomorphic over $\hat{v}_{0}K_{0}$
and
residue fields elementarily equivalent over $K_{0}\hat{v}_{0}$.
It follows from
\cite[Theorem 7.1]{FVK}
that
$(K_{1},\hat{v}_{1})$ and $(K_{2},\hat{v}_{2})$
are elementarily equivalent over $(K_{0},\hat{v}_{0})$,
i.e.~$(K_{1},\hat{v}_{1})\equiv_{(K_{0},\hat{v}_{0})}(K_{2},\hat{v}_{2})$.
Since the valuation rings of all three valuations $v_{i}$
are $\emptyset$-definable by the same $\mathfrak{L}_{\mathrm{ring}}$-formula
in each field $K_{i}$ (again, this is a variant of Robinson's classical definition of $\mathbb{Z}_p$ in $\mathbb{Q}_p$, see, e.g., 
\cite[Corollary 2]{Hon}),
it follows that $(K_{1},v_{1})$ and $(K_{2},v_{2})$
are elementarily equivalent over $(K_{0},v_{0})$,
i.e.~$(K_{1},v_{1})\equiv_{(K_{0},v_{0})}(K_{2},v_{2})$.
This proves the claim.

We return to the setting of an extension $(K,v)\subseteq(L,w)$ of models of $T_{\mathrm{ur}}$ for which $Kv\preceq Lw$ and $vK\preceq wL$ are elementary.
From the claim it follows that $(K,v)\equiv_{(K,v)}(L,w)$, equivalently the extension $(K,v)\subseteq(L,w)$ is elementary.
\end{proof}

\begin{remark}
In fact, the relative model completeness for non-strict Cohen rings
(Proposition~\ref{prp:MC_nonstrict})
can also be combined
with a result by B\'elair (\cite[Corollaire 5.2(2)]{Bel99}) to show model completeness,
albeit in a slightly different ($\omega$-sorted) language.
\end{remark}

As a consequence, we get the following embedding version of the Ax-Kochen/Ershov result:
\begin{corollary}\label{cor:AKE-E}
Let $(K,v)\subseteq (L,w)$ be two unramified henselian valued fields.
Then, we have
$$ \underbrace{Kv \preceq Lw}_{\textrm{in }\mathfrak{L}_\mathrm{ring}} \textrm{ and } \underbrace{vK \preceq wL}_{\textrm{in }\mathfrak{L}_\mathrm{oag}} \, \Longleftrightarrow \, 
\underbrace{(K,v) \preceq (L,w)}_{\textrm{in }\mathfrak{L}_\mathrm{vf}}.
$$
\end{corollary}

For Cohen rings, or more generally unramified henselian valued fields taking values in a $\mathbb{Z}$-group, 
our result simplifies to:
\begin{corollary}
Let $(K,v)\subseteq (L,w)$ be two unramified henselian valued fields with value groups $vK\equiv wL\equiv\mathbb{Z}$.
Then, we have
$$ \underbrace{Kv \preceq Lw}_{\textrm{in }\mathfrak{L}_\mathrm{ring}}
\, \Longleftrightarrow \, 
\underbrace{(K,v) \preceq (L,w)}_{\textrm{in }\mathfrak{L}_\mathrm{vf}}.
$$
\end{corollary}

\section{Embedding lemma and relative existential completeness}
The aim of this section is to
prove an embedding lemma for unramified henselian valued fields. This will be applied to prove relative existential completeness of unramified
henselian valued fields of fixed finite degree of imperfection as well as to show stable embeddedness of residue field
and value group in the subsequent section.
The proof of the embedding lemma (i.e.~the next proposition) is a refined version
of the proofs given in \cite[Lemmas 5.6 and 6.4]{FVK}. In Kuhlmann's terminology,
we show that the class of $\aleph_1$-saturated models of $T_\mathrm{ur}$ satisfies an appropriate version of the 
Relative Embedding Property (cf.~\cite[p.~31]{FVK}).

\begin{proposition}[Embedding Lemma] \label{prp:emb}
Let $(L_{1},v_{1})$ and $(L_{2},v_{2})$ be extensions of $(K,v)$, and assume that all three
are $\aleph_1$-saturated models of $T_\textrm{ur}$.
Suppose that $L_{1}v_{1}/Kv$ is separable and $v_{1}L_{1}/vK$ is torsion-free.
Moreover, assume that $(L_{2},v_{2})$ is $|L_{1}|^{+}$-saturated and that there
are embeddings
$\varphi_{k}:L_{1}v_{1}\longrightarrow L_{2}v_{2}$
over $Kv$ 
and
$\varphi_{\rmGamma}:v_{1}L_{1}\longrightarrow v_{2}L_{2}$
over $vK$.
Suppose that $L_{2}v_{2}/\varphi_{k}(L_{1}v_{1})$ is separable.
Then, there is an embedding $\varphi:(L_1,v_1) \longrightarrow (L_2,v_2)$ over $K$
that induces $\varphi_k$ and $\varphi_\rmGamma$ on residue field and value group respectively.

Moreover, if $\varphi_{k}$ and $\varphi_{\rmGamma}$ are elementary embeddings, then any such embedding $\varphi$ is elementary.
\end{proposition}
\begin{proof}
Since $(K,v)$ is henselian and unramified, it is defectless, therefore our assumptions on value group and residue field imply that $K$ is relatively algebraically closed in $L_{1}$.
Let $w$ denote the finest proper coarsening of $v$ on $K$, and, likewise, let $w_i$ denote 
the finest proper coarsening of $v_i$ on $L_i$ (for $i=1,2$).
Note that $w$ is the restriction of each $w_{i}$ to $K$.
By our saturation assumption,
the valued fields $(Kw, \bar{v})$ and $(L_iw_i, \bar{v_i})$ are all Cohen fields.
The inclusion $(K,w)\subseteq(L_{1},w_1)$ gives rise to an inclusion $Kw \subseteq L_1w_1$. Moreover, $\varphi_k$ induces an (not necessarily unique) embedding $$\psi_k: (L_1w_1, \bar{v_1}) \longrightarrow
(L_2w_2, \bar{v_2})$$ over $(Kw,\bar{v})$ by Cohen's Homomorphism theorem (Theorem \ref{thm:CoHo}). 
 We now fix one such $\psi_k$.
We note that, in order to show that an embedding $\varphi$ of a subfield
of $(L_1,v_1)$ into $(L_2,v_2)$ induces $\varphi_k$, it suffices to show that it
induces
$\psi_k$:
\begin{claim} \label{psi}
For any $(F,w_1,v_{1})\subseteq(L_{1}, w_1 ,v_{1})$ extending $(K,w,v)$,
if $\varphi:(F,w_{1})\longrightarrow(L_{2},w_{2})$ is an embedding over $K$ that induces $\psi_{k}$,
then $\varphi$ is also an embedding
$\varphi:(F,v_{1})\longrightarrow(L_{2},v_{2})$
that induces $\varphi_{k}$.
\end{claim}
\begin{claimproof}
Note that $\psi_{k}$ is an embedding of valued fields $(L_{1}w_{1},\bar{v}_{1})\longrightarrow(L_{2}w_{2},\bar{v}_{2})$.
For $a\in F$, we have
\begin{align*}
    v_{1}(a)\geq0&\Leftrightarrow w_{1}(a)\geq0\;\textrm{and}\;\bar{v}_{1}(aw_{1})\geq0\\
    &\Leftrightarrow w_{2}(\varphi(a))\geq0\;\textrm{and}\;\bar{v}_{2}(\psi_{k}(aw_{1}))\geq0\\
    &\Leftrightarrow w_{2}(\varphi(a))\geq0\;\textrm{and}\;\bar{v}_{2}((\varphi(a))w_{2})\geq0\\
    &\Leftrightarrow v_{2}(\varphi(a))\geq0.
\end{align*}
Since $\varphi$ induces $\psi_{k}$, and $\psi_{k}$ is an embedding
$(Fw_{1},\bar{v}_{1})\longrightarrow(L_{2}w_{2},\bar{v}_{2})$
that induces $\varphi_{k}$,
it follows that $\varphi$ induces $\varphi_{k}$.
\end{claimproof}

We now adapt the proof of \cite[Lemmas 5.6 and 6.4]{FVK} carefully to our setting, in order to construct an
embedding $\varphi: (L_1,v_1) \longrightarrow (L_2,v_2)$
over $K$
that induces
$\varphi_k$ and $\varphi_\rmGamma$.
We also use $\varphi$ to denote the restriction of $\varphi$ to any subfield of $L_1$.
Let $\mathcal{T}$ be a standard valuation transcendence basis of $(L_{1},w_{1})/(K,w)$, i.e.
$$\mathcal{T} =\{x_i, y_i\mid i\in I, j\in J \}$$
such that the set of values $\{w_1(x_i)\}_{i\in I}$ is a maximal rationally independent set in 
$w_1L$ over $wK$ and such that the set of residues $\{y_jw_1\}_{j \in J}$ is a transcendence base
of $L_1w_1$ over $Kw$.
Let $K'$ be the relative algebraic closure of $K(\mathcal{T})$ in $L_1$, and, by an abuse of notation,
let $w_1$ also denote the restriction of $w_1$ to $K'$ and its subfields.
Note that $(K',w_{1})/(K,w)$ is without transcendence defect, by \cite[Corollary 2.4]{FVK}.
See \cite[p.~4]{FVK} for the definition of `without transcendence defect'.

\smallskip
{\bf Step 1: Extending to valued function fields without transcendence defect.}
Let $L$ be a subfield of $K'$ that is a finitely generated extension of $K$.
Since $K$ is relatively algebraically closed in $L$, $(L,w_{1})/(K,w)$ is a valued function field without transcendence defect.
By \cite[Theorem 1.9]{FVK}, $(L,w_{1})/(K,w)$ is strongly inertially generated,
i.e.~there is a transcendence basis $\mathcal{T}_L=\{x_{i},y_{j}\mid i\in I_L,j\in J_L\}\subseteq L$ such that
\begin{enumerate}
\item
$w_{1}L=w_{1}K(\mathcal{T}_L)=wK\oplus\bigoplus_{i}\mathbb{Z}\cdot w_{1}(x_{i})$
\item
$\{y_{j}w_{1}\}_{j \in J_L}$ is a separating transcendence base of $Lw_{1}/Kw$, and
\item
there is an element $a\in L^{h}$ (the henselization of $L$ with respect to $w_{1}$) such that
$L^{h}=K(\mathcal{T}_{L})^{h}(a)$,
$w_{1}(a)=0$,
and
$\big(K(\mathcal{T}_{L})w_{1}\big)(aw_{1})/K(\mathcal{T}_{L})w_{1}$
is a separable extension of degree 
$[K(\mathcal{T}_{L})^{h}(a):K(\mathcal{T}_{L})^{h}]$.
\end{enumerate}
Note that in our case, the separability in {\bf(ii)} and {\bf(iii)} is automatic, since $w_{1}$ is of residue characteristic zero.
We now explore these three properties, one after the other, in order to construct an embedding $\varphi:(L,v_{1})\longrightarrow(L_{2},v_{2})$ over $K$ that induces $\varphi_{k}$ and $\varphi_{\rmGamma}$.
\begin{claim}
$v_{1}L=vK\oplus\bigoplus_{i}\mathbb{Z}\cdot v_{1}(x_{i})$.
\end{claim}
\begin{claimproof}
Suppose there are $n_{i}\in\mathbb{Q}$ and $\alpha\in vK$ such that 
$\alpha+\sum_{i}n_{i}\cdot v_{1}(x_{i})=0$.
Then
$(\alpha+\mathbb{Z})+\sum_{i}n_{i}\cdot w_{1}(x_{i})=0+\mathbb{Z}$.
By $\mathbb{Q}$-linear independence of the $w_{1}(x_{i})$ over $wK$ in $w_{1}L$,
all the $n_{i}$ are zero.
Therefore the $v_{1}(x_{i})$ are $\mathbb{Q}$-linearly independent from $vK$ in $v_{1}L$.
Thus $vK\oplus\bigoplus_{i}\mathbb{Z}\cdot v_{1}(x_{i})\leq v_{1}L$.

Let $\gamma\in v_{1}L$.
There exist $n_{i}\in\mathbb{Z}$ and $\alpha\in vK$ such that
$(\gamma+\mathbb{Z})=(\alpha+\mathbb{Z})+\sum_{i}n_{i}w_{1}(x_{i})$.
Therefore for some $\beta\in vK$ we have
$\gamma=\beta+\sum_{i}n_{i}v_{1}(x_{i})$.
\end{claimproof}
Next, we choose $\mathcal{T}_{L}'=\{x_{i}',y_{j}'\mid i\in I_{L},j\in J_{L}\}\subseteq L_{2}$ such that
\begin{enumerate}
\item
$v_{2}(x_{i}')=\varphi_{\rmGamma}(v_{1}(x_{i}))$, for each $i\in I_{L}$, and
\item
$y_{j}'w_{2}=\psi_{k}(y_{j}w_{1})$, for each $j\in J_{L}$.
\end{enumerate}
Immediately:
$w_{2}(x_{i}')=\psi_{\rmGamma}(w_{1}(x_{i}))$, for each $i\in I_L$.
Exactly as in the proof of \cite[Lemma 5.6]{FVK},
there is an isomorphism
$\varphi:(K(\mathcal{T}_{L}),w_{1})\longrightarrow(K(\mathcal{T}_{L}'),w_{2})$
that maps
\begin{align*}
    x_{i}&\longmapsto x_{i}'\\
    y_{j}&\longmapsto y_{j}',
\end{align*}
and which therefore induces
$\psi_{k}$
and
$\psi_{\rmGamma}$.

\begin{claim}
$\varphi$ is also an isomorphism $\varphi:(K(\mathcal{T}_{L}),v_{1})\longrightarrow(K(\mathcal{T}_{L}'),v_{2})\subseteq(L_{2},v_{2})$
which induces
$\varphi_{k}$
and
$\varphi_{\rmGamma}$.
\end{claim}
Let $f\in K[\mathcal{T}_{L}]$, written
$f=\sum_{k}c_{k}\prod_{i}x_{i}^{\mu_{k,i}}\prod_{j}y_{j}^{\nu_{k,j}}$,
for $c_{k}\in K$.
\begin{claimproof}
We already know that $\varphi$ is an isomorphism of fields, inducing both $\psi_K$ and 
$\psi_\rmGamma$.
By Claim \ref{psi}, $\varphi$ is moreover an isomorphism of valued fields with respect to the $v_i$'s that induces $\varphi_{k}$.

We now check that it induces $\varphi_{\rmGamma}$,
first working in the case $f\in K[x_{i}\mid i\in I_L]$.
\begin{align*}
    v_{2}(\varphi(f))&=\min_{k}\{v_{2}(c_{k})+\sum_{i}\mu_{k,i}v_{2}x_{i}'\}\\
    &=\min_{k}\{v_{2}(c_{k})+\sum_{i}\mu_{k,i}\varphi_{\rmGamma}(v_{1}x_{i})\}\\
    &=\varphi_{\rmGamma}(\min_{k}\{v_{1}(c_{k})+\sum_{i}\mu_{k,i}v_{1}x_{i}\})\\
    &=\varphi_{\rmGamma}(v_{1}(f)).
\end{align*}
Since the value group of $K[x_{i}\mid i\in I_L]$ with respect to $v_{1}$ is already $vK\oplus\bigoplus_{i}\mathbb{Z}\cdot v_{1}x_{i}=v_{1}K(\mathcal{T}_{L})$, indeed $\varphi$ induces $\varphi_{\rmGamma}$.
\end{claimproof}
By the universal property of henselizations,
$\varphi$ extends to an isomorphism
$$\varphi: (K(\mathcal{T}_{L})^{h},w_{1})\longrightarrow(K(\mathcal{T}_{L}')^{h},w_{2}) \subseteq (L_2,w_2)$$
where the henselizations are taken with respect to $w_i$ (for $i \in \{1,2\}$). 
Note that $\varphi$ still induces both $\psi_{k}$ and $\psi_{\rmGamma}$ since henselizations are immediate extensions. Thus, by Claim \ref{psi}, $\varphi$ is also an
isomorphism $(K(\mathcal{T}_{L})^{h},v_{1})\longrightarrow
(K(\mathcal{T}_{L}')^{h},v_{2})$.
Since the henselization $K(\mathcal{T}_{L})^{h}$ of $K(\mathcal{T}_{L})$ with respect to $w_1$
is a subfield of the henselization with respect to $v_1$,
which is an immediate extension of $(K(\mathcal{T}_{L}),v_{1})$,
$\varphi$ induces $\varphi_{k}$ and $\varphi_{\rmGamma}$.

Recall that by property {\bf(iii)} of strong inertial generation, there exists an element $a\in L^{h}$ such that
$L^{h}=K(\mathcal{T}_{L})^{h}(a)$, $w_{1}(a)=0$,
and $(K(\mathcal{T}_{L})^{h}w_{1})(aw_{1})/K(\mathcal{T}_{L})^{h}w_{1}$ is a separable extension of degree
$[K(\mathcal{T}_{L})^{h}(a):K(\mathcal{T}_{L})^{h}]$.

Let $f\in\mathcal{O}_{(K(\mathcal{T}_{L})^{h},w_{1})}[X]$ be such that $fw_{1}$ is the minimal polynomial of $aw_{1}$ over $K(\mathcal{T}_{L})^{h}w_{1}$.
By henselianity, there exists a unique root $a'\in L_{2}$ of $\varphi(f)$ such that $a'w_{2}=\psi_{k}(aw_{1})$.
\begin{claim}
$\varphi$ extends to an isomorphism
$\varphi:(K(\mathcal{T}_{L})^{h}(a),v_{1})\longrightarrow
(K(\mathcal{T}_{L}')^{h}(a'),v_{2})\subseteq
(L_{2},v_{2})$
which maps $a$ to $a'$ and induces $\varphi_{k}$ and $\varphi_{\rmGamma}$.
\end{claim}
\begin{claimproof}
Mapping $a\longmapsto a'$ allows us to extend $\varphi$
to an isomorphism of fields
$\varphi:K(\mathcal{T}_{L})^{h}(a)\longrightarrow K(\mathcal{T}_{L}')^{h}(a')$.
By henselianity of $(K(\mathcal{T}_{L})^{h},w_{1})$,
$\varphi$ is an isomorphism of valued fields
$(K(\mathcal{T}_{L})^{h}(a),w_{1})\longrightarrow(K(\mathcal{T}_{L}')^{h}(a'),w_{2})$.

We now argue that $\varphi$ induces $\psi_{k}$.
Since
$1,aw_{1},\ldots,a^{n-1}w_{1}$
are linearly independent over $K(\mathcal{T}_{L})^{h}w_{1}$,
we have
$w_{1}\big(\sum_{i<n}c_{i}a^{i}\big)=\min_{i} w_{1}(c_{i})$,
for all $c_{i}\in K(\mathcal{T}_{L})^{h}$.
In particular, we have
$$\mathcal{O}_{(K(\mathcal{T}_{L})^{h}(a),w_{1})}
=\Big\{\sum_{i<n}c_{i}a^{i}\;\Big|\; c_{i}\in\mathcal{O}_{(K(\mathcal{T}_{L})^{h},w_{1})}\Big\}.$$
Let $g\in\mathcal{O}_{(K(\mathcal{T}_{L})^{h},w_{1})}[X]$.
Then $\varphi$ induces $\psi_{k}$ since we have
$$(\varphi(g(a)))w_{2}=(\varphi(g)(a'))w_{2}=(\varphi(g)w_{2})(a'w_{2})=(\psi_{k}(gw_{1}))(\psi_k(aw_{1}))=\psi_k((g(a))w_1).$$
Thus, by Claim \ref{psi},
$\varphi: (K(\mathcal{T}_{L})^{h}(a),v_{1})\longrightarrow
(K(\mathcal{T}_{L}')^{h}(a'),v_{2})$ is an isomorphism that induces $\varphi_k$.
Finally, the value group of $(K(\mathcal{T}_{L})^{h}(a),v_{1})$ is torsion over the value group of $(K(\mathcal{T}_{L})^{h},v_{1})$: for each $\gamma\in v_{1}K(\mathcal{T}_{L})^{h}(a)$ we have 
$n\gamma\in v_{1}K(\mathcal{T}_{L})^{h}$.
Since the restriction of $\varphi$ to $K(\mathcal{T}_{L})^{h}$ induces $\varphi_{\rmGamma}$ on $v_{1}K(\mathcal{T}_{L})^{h}$, and multiplication by $n$ in ordered abelian groups is injective,
we conclude that $\varphi$ induces $\varphi_{\rmGamma}$.
\end{claimproof}
This finishes the construction of an embedding
$\varphi:(L,v_{1})\longrightarrow(L_{2},v_{2})$
that induces $\varphi_{k}$ and $\varphi_{\rmGamma}$. Therefore, we have completed Step 1.
\smallskip

By saturation, we obtain an embedding
$$\varphi:(K',v_{1})\longrightarrow(L_{2},v_{2})$$ over $K$
that induces $\varphi_{k}$ and $\varphi_{\rmGamma}$. More precisely, we realise the
finitely consistent type
$$\mathrm{tp}_{\mathrm{qf}}((K',v_1)/(K,v))\cup \{v(x_{c})=\varphi_{\rmGamma}(v_{1}(c))\mid c\in K'\}\cup\{x_{c}v=\varphi_{k}(cv_{1})\mid c\in \mathcal{O}_{(K',v_{1})}\},$$
which is a type over $|K'|$-many parameters.

\smallskip
{\bf Step 2: Extending to immediate function fields.}
Our present aim is to extend $\varphi$ to an embedding $(L_{1},v_{1})\longrightarrow(L_{2},v_{2})$
over $K$
that 
induces $\varphi_{k}$ and $\varphi_{\rmGamma}$.
Since $K'$ contains a standard valuation transcendence basis for $(L_{1},w_{1})/(K,w)$,
we have that $w_{1}L_{1}/w_{1}K'$ is torsion and $L_{1}w_{1}/K'w_{1}$ is algebraic.
Since $K'$ is relatively algebraically closed in $L_{1}$, it follows that
$(K',v_{1})$ and $(K',w_1)$ are henselian.
Because $\mathrm{char}(K'w_1)=0$, we conclude that the extension $(L_1,w_1)/(K',w_1)$ is immediate
(for example see \cite[Lemma 3.7]{FVK}).
Thus, so is $(L_1,v_1)/(K',v_1)$.
Therefore any extension of $\varphi$ to an embedding
$(L_1,v_1) \longrightarrow (L_2,v_2)$ automatically induces $\varphi_k$ and $\varphi_\rmGamma$.

Consider a finitely generated subextension $F/K'$ of $L_{1}/K'$;
then $(F,v_{1})/(K',v_{1})$ is an immediate function field.
In fact, proceeding iteratively, it suffices to find a way to extend $\varphi$ to immediate function fields of transcendence degree $1$.
By \cite[Theorem 2.2]{Kuh19},
such extensions are henselian rational,
i.e.~subextensions of the henselization of a simple transcendental and immediate extension, of transcendental type.

Suppose we have extended $\varphi$ to an embedding
$\varphi:(F_{0},v_{1})\longrightarrow(L_{2},v_{1})$
over $K$ that induces $\varphi_{k}$ and $\varphi_{\rmGamma}$,
where $(F_{0},v_{1})$ is the henselization of a finitely generated extension of $K'$.
Let $b\in L_{1}$ be transcendental over $F_{0}$.
Then $b$ is a pseudo-limit of a pseudo-Cauchy sequence $(b_{\rho})_{\rho<\sigma}\subseteq F_{0}$ of transcendental type, with respect to $v_{1}$: as $(F_{0},v_1)$ is henselian and algebraically maximal, all its immediate extensions are of transcendental type.
Then $(\varphi(b_{\rho}))_{\rho<\sigma}$ is a pseudo-Cauchy sequence in $L_{2}$, also of transcendental type, now with respect to $v_{2}$.
By saturation, this sequence has a pseudo-limit $b'\in L_{2}$.
We extend $\varphi$ by mapping $b\longmapsto b'$
to an isomorphism of fields
$F_{0}(b)\longrightarrow\varphi(F_{0})(b')$.
Automatically, we get that 
$$\varphi:(F_{0}(b),v_{1})\longrightarrow(\varphi(F_{0})(b'),v_{2})\subseteq (L_2,v_2)$$
is an isomorphism of valued fields
since pseudo-Cauchy sequences of transcendental type determine the isomorphism type of the valued field generated by a pseudo-limit
(see \cite[Theorem 2]{Kaplansky}).

Finally, we extend $\varphi$ to the henselization of $(F_0(b),v_{1})$, which
is a subfield of $(L_1,v_1)$. This is accomplished by applying the universal property of the henselization once again.

\smallskip
We have now constructed an embedding $\varphi: (L_1,v_1) \longrightarrow (L_2,v_2)$ over $K$ which induces
$\varphi_k$ and $\varphi_\rmGamma$. Last but not least, if both $\varphi_k$ and $\varphi_\rmGamma$ are elementary embeddings,
the AKE$_{\preceq}$-principle (Corollary \ref{cor:AKE-E}) implies that $\varphi$ is in fact an elementary embedding
of unramified henselian valued fields, as desired. This finishes the proof.
\end{proof}

\begin{theorem} \label{thm:AKEE}
Let $(K,v) \subseteq (L,w)$ be an extension of unramified henselian valued fields
such that $Kv$ and $Lw$ have the same finite degree of imperfection.
If the induced embeddings of residue field $Kv \subseteq Lw$ and value group
$vK \subseteq wL$ are existentially closed (in $\mathfrak{L}_\textrm{ring}$ and $\mathfrak{L}_\textrm{oag}$ respectively), we have
$$(K,v) \preceq_\exists (L,w).$$
\end{theorem}
\begin{proof}
By taking ultrapowers if necessary, we may assume that both $(K,v)$ and $(L,w)$ are $\aleph_{1}$-saturated.
Let $(K^{*},v^{*})$ be a $|L|^{+}$-saturated elementary extension of $(K,v)$.
Since $Kv\preceq_{\exists}Lw$ and $vK\preceq_{\exists}wL$, there are embeddings
$\varphi_{k}:Lw\longrightarrow (Kv)^{*}=K^{*}v^{*}$ over $Kv$
and
$\varphi_{\rmGamma}:wL\longrightarrow (vK)^{*}=v^{*}K^{*}$ over $vK$.
Let $\bfbeta\subseteq Kv$ be a $p$-basis.
Since $Kv\preceq_{\exists}Lw$, $\bfbeta$ is also $p$-independent in $Lw$;
in particular $Lw/Kv$ is separable.
Since the degree of imperfection of $Lw$ is the same as that of $Kv$, $\bfbeta$ is a $p$-basis of $Lw$.
Since $K^{*}v^{*}$ is an elementary extension of $Kv$, $\bfbeta$ is also a $p$-basis of $K^{*}v^{*}$.
Therefore $K^{*}v^{*}/\varphi_{k}(Lw)$ is separable.
Finally, since $vK\preceq_{\exists}wL$, the group $wL/vK$ is torsion-free.
Thus we may apply Proposition~\ref{prp:emb} to obtain an embedding
$\varphi:(L,w)\longrightarrow(K^{*},v^{*})$ over $K$ that induces $\varphi_{k}$ and $\varphi_{\rmGamma}$.
In particular, $(K,v)\preceq_{\exists}(L,w)$.
\end{proof}

Note that the reason why we require a fixed finite degree of imperfection in Theorem \ref{thm:AKEE} is that when we embed $Lw$ into an elementary extension of $Kv$ in the proof, we need to make sure that this latter extension is separable in order to apply Proposition \ref{prp:emb}.

As a consequence of Theorem \ref{thm:AKEE}, we get the following existential version of the Ax-Kochen/Ershov result:
\begin{corollary}\label{cor:AKE-EE}
Let $(K,v)\subseteq (L,w)$ be two unramified henselian valued fields
such that $Kv$ and $Lw$ have the same finite degree of imperfection.
Then, we have
$$ \underbrace{Kv \preceq_\exists Lw}_{\textrm{in }\mathfrak{L}_\mathrm{ring}} \textrm{ and } \underbrace{vK \preceq_\exists wL}_{\textrm{in }\mathfrak{L}_\mathrm{oag}} \, \Longleftrightarrow \, 
\underbrace{(K,v) \preceq_\exists (L,w)}_{\textrm{in }\mathfrak{L}_\mathrm{vf}}.
$$
\end{corollary}

\section{Stable embeddedness}
In this final section, we comment on the structure induced on the residue field
and value group in unramified henselian valued fields. 

\begin{definition}
Let $\mathcal{M}$ be a structure and let $P\subseteq M^{k}$ be a definable set.
We say that $P$ is
{\em stably embedded}
if for all formulas
$\varphi(x,y)$
and all $b\in M^{|y|}$,
$\varphi(M^{k|x|},b)\cap P^{|x|}$ is $P$-definable.
\end{definition}

In order to be able to consider the residue field and the value group as definable sets in
a valued field (respectively, in a non-strict Cohen ring), it is necessary to switch from the language
of valued fields (respectively, the language of rings) to a multisorted setting.
Neither the results nor the proofs in this section are sensitive to this technicality.
An alternative approach is to widen the definition of stable embeddedness to also apply to interpretable sets, as discussed in
\cite{Pillay}.

In the appendix to \cite{CH}, Chatzidakis and Hrushovski show stable embeddedness of a type-definable set is equivalent to an automorphism-lifting criterion in saturated models. 
This shows:
\begin{theorem}\label{thm:SEk_non-strict}
Let $A\models T_{\mathrm{pc}}(k,n)$.
Then $k_{A}$ is stably embedded in $A$.
\end{theorem}
\begin{proof}
By the Structure Theorem (Corollary \ref{cor:Cohen_structure_2}),
each automorphism of the residue field $k_{A}$ lifts to an automorphism of the (non-strict) Cohen ring $A$.
In particular this holds for sufficiently saturated models, and thus $k_A$ is stably embedded as a
pure field by \cite[Appendix. Lemma 1]{CH}.
\end{proof}

Our aim is now to show that both residue field and value group are stably embedded
in unramified henselian valued fields. Following the approach in \cite[Lemma 3.1]{JS20}, we do this via our embedding lemma. 
More precisely, stable embeddedness follows from Proposition \ref{prp:emb} in a straightforward manner
since the type of an element of the residue field (respectively, the value group)
over the residue field (resp., value group) of an elementary submodel determines the type of that element over the submodel:
\begin{theorem} \label{thm:SEk}  \label{thm:stably.embedded}
Let $(K,v)$ be an unramfied henselian valued field. Then the value group $vK$ and the residue field $Kv$ are both stably embedded, as a pure ordered abelian group and as a pure field, respectively.
\end{theorem}
\begin{proof}
In light of Proposition \ref{prp:emb}, this is exactly the same argument as the proof of \cite[Lemma 3.1]{JS20}.
\end{proof}

As an immediate consequence, we get the following generalization of \cite[Theorem 7.3]{vdD14} to the case of imperfect residue fields:
\begin{corollary}
Let $(K,v)$ be an unramified henselian valued field. Then each subset of $Kv^n$ which is $\mathfrak{L}_\mathrm{vf}$-definable
in $(K,v)$ is already $\mathfrak{L}_\mathrm{ring}$-definable in $Kv$.
\end{corollary}

We now give an example that stable embeddedness of the residue field no longer
holds for finite extensions of unramified henselian valued fields.
A valued field $(K,v)$ of mixed characteristic $(0,p)$ is {\bf finitely ramified} if the interval $(0,v(p)]$ is finite.
The following example shows that an analogue of Theorem \ref{thm:SEk}
does not hold for all finitely ramified henselian valued fields.

\begin{example} \label{ex:fr}
Consider a field $k$ of characteristic $p>2$ with elements $\alpha_{1},\alpha_{2}\in k$ such that
\begin{enumerate}
\item
there is an automorphism $\varphi$ of $k$ which maps $\alpha_{1}$ to $\alpha_{2}$, and
\item
$\alpha_{1}$ and $\alpha_{2}$ lie in different multiplicative cosets of $k^{\times2}$.
\end{enumerate}
Let $(K,v)$ be an unramified henselian valued field with residue field $k$.
We distinguish a representative $a_{1}\in\res^{-1}(\alpha_{1})$ of $\alpha_{1}$, and let
$(L,w)$ be the extension of $(K,v)$ given by adjoining a square-root $b_{1}$ of $pa_{1}$.
Then $(L,w)$ is henselian and finitely ramified, and $(L,w)/(K,v)$ is a quadratic extension with ramification degree $e=2$ and inertia degree $f=1$.
It is also easy to see that $L$ contains no element $b_{2}$ such that
$b_{2}^{2}= pa_{2}$,
where $a_{2}$ is any representative of $\alpha_{2}$.
Suppose that $\rmPhi$ is an automorphism of $(L,w)$ which lifts $\varphi$.
Then
\begin{align*}
    \res(\rmPhi(b_{1})^2/p) = \res(\rmPhi(a_{1})) = \varphi(\res(a_{1})) = \alpha_{2},
\end{align*}
showing that $\rmPhi(b_{1})$ is just such a non-existent element $b_{2}$ of $L$,
which is a contradiction.

Note that there are elementary extensions $(L^{*},w^{*})$ of $(L,w)$ with an automorphism
$\varphi^*$ on the residue field $L^*w^*$ such that (i) and (ii) still hold.
These can be obtained by adding a symbol for the automorphism $\varphi$ to the language on the residue field before saturating.
The argument above shows in fact that in none of these models, $\varphi^*$ can be lifted to an automorphism of $L^*$.

Therefore the residue field $Lw=k$ is not stably embedded in $(L,w)$.
Finally, note that there are many such fields $k$;
for example consider $k=\mathbb{F}_{p}(\alpha_{1},\alpha_{2})$ where $\alpha_{1},\alpha_{2}$ are algebraically independent over $\mathbb{F}_{p}$.
\end{example}

%%% ACKNOWLEDGEMENTS %%%
\section*{Acknowledgements}

We would like to extend our sincere thanks to
Martin Hils, Florian Pop, Silvain Rideau-Kikuchi, and Tom Scanlon,
for numerous very helpful conversations.

This work was begun while we were participating in the
{\em  Model Theory, Combinatorics and Valued fields}
trimester at the Institut Henri Poincar\'{e}
and
the Centre \'{E}mile Borel,
and we would like to extend our thanks to the organisers.
Sylvy Anscombe was also partially supported by The Leverhulme Trust
under grant RPG-2017-179.
Franziska Jahnke was also
funded by the Deutsche Forschungsgemeinschaft (DFG, German Research Foundation) under Germany's Excellence Strategy EXC 2044-390685587, Mathematics M\"unster: Dynamics-Geometry-Structure and the CRC878, as well as by a Fellowship from the Daimler and Benz Foundation.

%%% BIBLIOGRAPHY %%%
\bibliographystyle{plain}

\end{document}